\newcommand{\nc}{\newcommand}
\nc{\dmo}{\DeclareMathOperator}
\nc{\nt}{\newtheorem}
\dmo{\colim}{colim}
\nc{\Z}{\mathbb{Z}}
\nc{\R}{\mathbb{R}}
\nc{\Q}{\mathbb{Q}}
\nc{\I}{\mathcal{I}}
\nc{\K}{\mathcal{K}}
\nc{\N}{\mathcal{N}}
\nc{\SN}{\mathcal{SN}}
\nc{\SI}{\mathcal{SI}}
\nc{\C}{\mathcal{C}}
\nc{\B}{\mathcal{B}}
\nc{\SB}{\mathcal{SB}}
\nc{\M}{\mathcal{M}}
\nc{\SM}{\mathcal{SM}}
\nc{\F}{\mathcal{F}}
\renewcommand{\S}{\mathcal{S}}
\dmo{\Isom}{Isom}
\dmo{\Homeo}{Homeo}
\dmo{\SHomeo}{SHomeo}
\dmo{\Teich}{Teich}
\dmo{\Mod}{Mod}
\dmo{\Out}{Out}
\dmo{\SMod}{SMod}
\dmo{\PMod}{PMod}
\dmo{\Sp}{Sp}
\dmo{\cd}{cd}
\dmo{\vcd}{vcd}
\dmo{\Stab}{Stab}
\dmo{\Bur}{Bur}
\nc{\margin}[1]{\marginpar{\tiny #1}}
\nc{\p}[1]{\smallskip\noindent{{\bf #1}}}
\begin{document}

\input{epsf.sty}


\title{Cohomology of the hyperelliptic Torelli group}

\author{Tara Brendle, Leah Childers, and Dan Margalit}

\address{Tara E. Brendle \\ School of Mathematics \& Statistics \\ University Gardens \\ University of Glasgow \\ G12 8QW \\ tara.brendle@glasgow.ac.uk}

\address{Dan Margalit \\ School of Mathematics\\ Georgia Institute of Technology \\ 686 Cherry St. \\ Atlanta, GA 30332 \\  margalit@math.gatech.edu}

\address{Leah Childers\\ 224 Yates Hall\\Pittsburg State University\\Pittsburg, KS 66762-7502\\lchilder@pittstate.edu}

\thanks{The third author gratefully acknowledges support from the National Science Foundation and the Sloan Foundation.}

\keywords{Torelli group, Johnson kernel, mapping class group}

\subjclass[2000]{Primary: 20F36; Secondary: 57M07}

\begin{abstract}
Let $\SI(S_g)$ denote the hyperelliptic Torelli group of a closed surface $S_g$ of genus $g$.  This is the subgroup of the mapping class group of $S_g$ consisting of elements that act trivially on $H_1(S_g;\Z)$ and that commute with some fixed hyperelliptic involution of $S_g$.  We prove that the cohomological dimension of $\SI(S_g)$ is $g-1$ when $g \geq 1$.  We also show that $H_{g-1}(\SI(S_g);\Z)$ is infinitely generated when $g \geq 2$.  In particular, $\SI(S_3)$ is not finitely presentable.  Finally, we apply our main results to show that the kernel of the Burau representation of the braid group $B_n$ at $t=-1$ has cohomological dimension equal to the integer part of $n/2$, and it has infinitely generated homology in this top dimension.
\end{abstract}

\maketitle

\vspace{-.5in}

\section{Introduction}

Let $S_g$ denote the closed, connected, orientable surface of genus $g$, and let $s$ be some fixed hyperelliptic involution of $S_g$.  The mapping class group $\Mod(S_g)$ is the group of isotopy classes of orientation-preserving homeomorphisms of $S_g$, and the \emph{hyperelliptic Torelli group} $\SI(S_g)$ is the subgroup of $\Mod(S_g)$ consisting of elements that commute with the homotopy class of $s$ and that act trivially on $H_1(S_g;\Z)$.  The group $\SI(S_g)$ arises, for example, as the fundamental group of the branch locus of the period mapping \cite[Section 4]{hain}.  Also, Ellenberg \cite{je} gives a description of the $\Sp(2g,\Z)$-module structure of the cohomology of the full Torelli group (see below) in terms of the cohomology of $\SI(S_g)$.

\p{Cohomological dimension.} The \emph{cohomological dimension} $\cd(G)$ of a group $G$ is the supremum over all $n$ so that there exists a $G$-module $M$ with $H^n(G;M) \neq 0$.  If a group $G$ has torsion, then $\cd(G) = \infty$.  On the other hand, if $G$ contains a torsion-free subgroup $H$ of finite index, then we can define the \emph{virtual cohomological dimension} $\vcd(G) = \cd(H)$.  It is a theorem of Serre that $\vcd(G)$ is well defined \cite[Th\'eor\`eme 1]{jps}.

\begin{maintheorem}
\label{thm:cd}
For $g \geq 1$, we have $\cd(\SI(S_g)) = g-1$.
\end{maintheorem}

\p{Dimensions of Torelli groups.} Let $\I(S_g)$ denote the \emph{Torelli group} of $S_g$, that is, the subgroup of $\Mod(S_g)$ consisting of elements that act trivially on $H_1(S_g;\Z)$.  Let $\K(S_g)$ denote the subgroup of $\I(S_g)$ generated by Dehn twists about separating simple closed curves.  It is a fact that $\SI(S_g)$ is a subgroup of $\K(S_g)$; this follows immediately from the naturality property of Johnson's homomorphism $\tau$ \cite[Lemma 2D]{djabelian} and Johnson's theorem that $\K(S_g)=\ker(\tau)$ \cite[Theorem 6]{dj2}.

Since
\[ \Mod(S_g) > \I(S_g) \geq \K(S_g) \geq \SI(S_g), \]
it follows from Fact~\ref{cd sub} below that the dimensions of these groups also form a decreasing sequence.  For $g \geq 2$, we in fact have the following:

\onehalfspace
\begin{center}
\begin{tabular}{r c l }
$\vcd(\Mod(S_g))$ & = & $4g-5$ \\
$\cd(\I(S_g))$ & = & $3g-5$ \\
$\cd(\K(S_g))$ & = & $2g-3$ \\
$\cd(\SI(S_g))$ & = & \ \,$g-1$.  
\end{tabular}
\end{center}
\singlespacing

\vspace{-3ex}

The first equality is due to Harer \cite[Theorem 4.1]{jlh}.  An alternate proof was given by Ivanov \cite[Theorem 6.6]{nvi4}.  The lower bound of $4g-5$ was also given by Mess \cite[Proposition 1]{mess}, and the upper bound follows from work of Culler--Vogtmann \cite{cv}.  The inequality $\cd(\I(S_g)) \geq 3g-5$ was proven by Mess \cite[Proposition 1]{mess}, and the inequality $\cd(\I(S_g)) \leq 3g-5$ was proven by Bestvina--Bux--Margalit \cite[Theorem A]{bbm}.  The dimension $\cd(\K(S_g))$ was computed by Bestvina--Bux--Margalit \cite[Theorem B]{bbm}.

In the case $g=2$, the groups $\I(S_2)$, $\K(S_2)$, and $\SI(S_2)$ are all equal (combine \cite[Theorem 8]{bh} with \cite[Theorem 2]{jp}\footnote{Powell states his result for $g \geq 3$, but his proof holds in the case $g=2$.}).  This agrees with the fact that $3g-5$, $2g-3$, and $g-1$ are all equal when $g=2$.

\p{The hyperelliptic Johnson filtration.} The \emph{Johnson filtration} of $\Mod(S_g)$ is the sequence of groups $\N_k(S_g)$ defined by:
\[ \N_k(S_g) = \ker(\Mod(S_g) \to \Out(\pi_1(S_g)/\pi_1^k(S_g))),\]
where $\pi_1^k(S_g)$ is the $k$th term of the lower central series for $\pi_1(S_g)$.  By definition, $\N_1(S_g) = \Mod(S_g)$ and $\N_2(S_g) = \I(S_g)$.  It is a theorem of Johnson that $\N_3(S_g) = \K(S_g)$ \cite{dj2}.  An argument of Farb \cite[Theorem 5.10]{farb} and the fact that $\N_k(S_g) \leq \K(S_g)$ for $k \geq 3$ gives
\[ g - 1 \leq \cd(\N_k(S_g)) \leq 2g-3 \]
for $g \geq 2$ and $k \geq 3$ (see Fact~\ref{cd sub} below).

We may also consider the groups $\SN_k(S_g) = \N_k(S_g) \cap \SMod(S_g)$.  For $k \geq 1$, we have $\SN_k(S_g) \leq \SI(S_g)$, and so $\cd(\SN_k(S_g)) \leq g-1$ for $g \geq 1$ and $k \geq 1$.  On the other hand, we will prove in Proposition~\ref{prop:cd sn lower} below that $\SN_k(S_g)$ contains a subgroup isomorphic to $\Z^{g-1}$ for $g \geq 1$ and $k \geq 1$.  Therefore, we have the following theorem.

\begin{theorem}
\label{thm:sn}
For $g \geq 1$ and $k \geq 1$, we have
\[ \cd(\SN_k(S_g)) = g-1. \]
\end{theorem}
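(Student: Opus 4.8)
The plan is to pin down $\cd(\SN_k(S_g))$ by establishing matching upper and lower bounds, each obtained from the monotonicity of cohomological dimension under passage to subgroups (Fact~\ref{cd sub}). The point is that one containment places $\SN_k(S_g)$ inside a group of known dimension, while a second containment places a group of known dimension inside $\SN_k(S_g)$; together these force equality.

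For the upper bound I would use the containment $\SN_k(S_g) \leq \SI(S_g)$ recorded above, together with the Main Theorem, which gives $\cd(\SI(S_g)) = g-1$. Since a subgroup has cohomological dimension no larger than that of the ambient group (Fact~\ref{cd sub}), this yields $\cd(\SN_k(S_g)) \leq g-1$ for all $g \geq 1$ and $k \geq 1$. This direction carries essentially all of the analytic weight, since the nontrivial input is the upper bound $\cd(\SI(S_g)) \leq g-1$ supplied by the Main Theorem.

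For the lower bound I would invoke Proposition~\ref{prop:cd sn lower}, which produces a subgroup of $\SN_k(S_g)$ isomorphic to $\Z^{g-1}$ for every $g \geq 1$ and $k \geq 1$. Because $\cd(\Z^{g-1}) = g-1$, realized by the aspherical closed $(g-1)$-torus, and because cohomological dimension is again monotone under subgroups, we conclude $\cd(\SN_k(S_g)) \geq g-1$. Combining the two inequalities gives the claimed equality.

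The genuine obstacle lies not in the assembly above but in the two imported ingredients: the upper bound from the Main Theorem, and the construction underlying Proposition~\ref{prop:cd sn lower}. For the latter, the delicate feature is that the free abelian subgroup must lie arbitrarily deep in the Johnson filtration, namely inside $\N_k(S_g)$ for \emph{all} $k$, while simultaneously commuting with the fixed hyperelliptic involution; I would expect this to be achieved by exhibiting $g-1$ elements supported on disjoint symmetric subsurfaces, each of infinite order and each lying in the relevant term of the filtration, so that they automatically commute and generate the desired copy of $\Z^{g-1}$. Once those two facts are in hand, Theorem~\ref{thm:sn} follows formally.
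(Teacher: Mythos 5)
Your proposal is correct and takes essentially the same route as the paper: the paper proves Theorem~\ref{thm:sn} by exactly this two-sided application of Fact~\ref{cd sub}, getting the upper bound from the containment $\SN_k(S_g) \leq \SI(S_g)$ together with Main Theorem~\ref{thm:cd}, and the lower bound from the copy of $\Z^{g-1}$ inside $\SN_k(S_g)$ supplied by Proposition~\ref{prop:cd sn lower}. Even your sketch of that proposition---infinite-order elements commuting with $s$, lying in $\N_k(S_g)$, and supported on disjoint subsurfaces so that they generate $\Z^{g-1}$---matches the paper's construction.
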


\p{Top-dimensional homology.} Bestvina--Bux--Margalit proved that the top-dimensional homology of $\I(S_g)$ is infinitely generated \cite[Theorem C]{bbm}.  We prove the analogous result for $\SI(S_g)$.

\begin{maintheorem}
\label{thm:htop}
For $g \geq 2$, the group $H_{g-1}(\SI(S_g);\Z)$ is infinitely generated.
\end{maintheorem}

Since $\I(S_1)$ is trivial, Main Theorem~\ref{thm:htop} does not hold for $g=1$.  Mess proved that $\SI(S_2) = \I(S_2)$ is an infinite rank free group \cite[Proposition 4]{gm}, from which it immediately follows that $H_1(\SI(S_2);\Z)$ is infinitely generated.

It is not known in general whether or not the groups $\SI(S_g)$ are finitely generated or finitely presented for $g \geq 3$.  However, we have the following immediate consequence of Main Theorem~\ref{thm:htop}.

\begin{cor}
The group $\SI(S_3)$ is not finitely presentable.
\end{cor}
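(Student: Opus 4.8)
The plan is to deduce this directly from the case $g=3$ of Main Theorem~\ref{thm:htop}, using a standard fact relating finite presentability to the finite generation of second integral homology. Specializing Main Theorem~\ref{thm:htop} to $g=3$ gives that $H_2(\SI(S_3);\Z)$, which is $H_{g-1}(\SI(S_g);\Z)$ for $g=3$, is infinitely generated. So the entire argument reduces to establishing that a finitely presented group has finitely generated $H_2$, and then applying the contrapositive.

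To set up that homological fact I would argue as follows. Suppose $G$ is finitely presented, say $G = \langle x_1,\dots,x_m \mid r_1,\dots,r_n\rangle$ with finitely many generators and relators. The associated presentation $2$-complex $K$ is a finite CW complex with one $0$-cell, $m$ one-cells, and $n$ two-cells, and $\pi_1(K) \cong G$. Attaching cells of dimension at least $3$ to kill the higher homotopy groups of $K$ produces a $K(G,1)$ whose $2$-skeleton is exactly $K$, hence is finite. In particular the cellular chain group $C_2$ of this $K(G,1)$ is free abelian of finite rank $n$.

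The key point is then that $H_2(G;\Z) = H_2(K(G,1);\Z)$ is a subquotient of $C_2$: writing $H_2 = \ker(\partial_2)/\operatorname{im}(\partial_3)$, the subgroup $\ker(\partial_2)$ of the finitely generated free abelian group $C_2$ is itself finitely generated, whence its quotient $H_2$ is finitely generated, no matter how many $3$-cells are attached. (Equivalently, one could invoke Hopf's formula $H_2(G;\Z)\cong (R\cap[F,F])/[F,R]$, or simply cite the fact that finite presentability implies the finiteness property $FP_2$, which is precisely what forces $H_2(G;\Z)$ to be finitely generated.) Taking the contrapositive and combining with the case $g=3$ of Main Theorem~\ref{thm:htop} then yields that $\SI(S_3)$ is not finitely presentable.

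I do not expect any real obstacle here: all of the genuine content lives in Main Theorem~\ref{thm:htop}, and this corollary is purely formal. The only thing to be careful about is the direction of the implication, namely that finite presentability controls $H_2$ rather than merely $H_1$. It is exactly the \emph{second} homology group being infinitely generated—that is, the case $g=3$, where $g-1=2$—that obstructs finite presentability; for other values of $g$ the infinite generation of $H_{g-1}$ would instead rule out the corresponding higher finiteness property rather than finite presentability itself.
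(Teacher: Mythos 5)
Your proof is correct and is exactly the argument the paper intends: the paper presents this corollary as an immediate consequence of Main Theorem~2 (the $g=3$ case gives infinitely generated $H_2(\SI(S_3);\Z)$), relying on the standard fact that a finitely presented group has finitely generated second integral homology, which you prove carefully via the presentation $2$-complex. Your closing remark about the direction of the implication (finite presentability controls $H_2$, and only the $g=3$ case yields non-finite-presentability rather than the failure of a higher finiteness property) is also the right caveat.
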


\p{The Burau representation.} Let $\Bur_{n}$ denote the kernel of the reduced Burau representation at $t=-1$.  In Section~\ref{sec:top}, we explain the precise connection between $\Bur_{n}$ and the hyperelliptic Torelli group.  We obtain the following theorem.

\begin{theorem}
\label{thm:burau}
For $n \geq 5$, we have
\[ \cd(\Bur_{n}) = \left\lfloor \frac{n}{2} \right\rfloor. \]
Also, $H_{\left\lfloor \frac{n}{2} \right\rfloor}(\Bur_{n};\Z)$ is infinitely generated.
\end{theorem}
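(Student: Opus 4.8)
The plan is to identify $\Bur_n$ with the hyperelliptic Torelli group of a surface with boundary and then to transport the computation to the closed case handled by Main Theorems~\ref{thm:cd} and~\ref{thm:htop}. Let $\Sigma$ be the double cover of the $n$-marked disk $D_n$ branched over the marked points, with deck transformation $\iota$. A Riemann--Hurwitz count gives $\chi(\Sigma)=2-n$, and since the boundary $\partial D$ lifts to one circle when $n$ is odd and to two circles when $n$ is even, the surface $\Sigma$ has genus $g$ with one boundary component when $n=2g+1$ and genus $g$ with two boundary components when $n=2g+2$. By Birman--Hilden theory the braid group $B_n$ is isomorphic to the group of isotopy classes of homeomorphisms of $\Sigma$ that fix $\partial\Sigma$ pointwise and commute with $\iota$; under this isomorphism the reduced Burau representation at $t=-1$ is exactly the action on $H_1(\Sigma;\Z)$, a group of rank $n-1$. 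Hence $\Bur_n\cong\SI(\Sigma)$. Making this dictionary precise is the purpose of Section~\ref{sec:top}.

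Next I would cap $\partial\Sigma$ symmetrically. Since $\iota$ rotates the single boundary circle by $\pi$ when $n$ is odd and interchanges the two boundary circles when $n$ is even, gluing on $\iota$-invariant disks produces the closed surface $S_g$ with its hyperelliptic involution, marking either a single fixed (Weierstrass) point when $n$ is odd or a free two-point orbit $\{p,\iota p\}$ when $n$ is even. The boundary twists are symmetric, separating, and act trivially on $H_1$, so they lie in $\Bur_n$, and their symmetric combination generates a central $\Z$. Capping therefore gives a central extension $1\to\Z\to\Bur_n\to Q\to1$. When $n$ is odd, every element of $\SI(S_g)$ fixes each Weierstrass point, so $Q=\SI(S_g)$. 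When $n$ is even, a symmetric Birman exact sequence identifies $Q$ as an extension $1\to F\to Q\to\SI(S_g)\to1$, where $F$ is the symmetric point-pushing subgroup of the orbit $\{p,\iota p\}$; this $F$ is the point-pushing group of a single point in the quotient $(2g+2)$-punctured sphere, hence a free group of cohomological dimension $1$, and it lies in $\Bur_n$.

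The cohomological dimension now follows. For the upper bound I would apply the inequality $\cd(G)\le\cd(N)+\cd(Q)$ for an extension $1\to N\to G\to Q\to1$, together with Main Theorem~\ref{thm:cd}: in the odd case $\cd(\Bur_n)\le 1+(g-1)=g$, and in the even case $\cd(\Bur_n)\le 1+(1+(g-1))=g+1$; in both cases this is $\lfloor n/2\rfloor$. For the matching lower bound I would produce a free abelian subgroup of rank $\lfloor n/2\rfloor$ by enlarging the group $\Z^{g-1}$ of Proposition~\ref{prop:cd sn lower} by the central boundary twist and, in the even case, by one infinite-order point-push chosen disjoint from the separating curves; Fact~\ref{cd sub} then gives $\cd(\Bur_n)\ge\lfloor n/2\rfloor$.

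For the homology statement I would propagate infinite generation up from $H_{g-1}(\SI(S_g);\Z)$, which is infinitely generated by Main Theorem~\ref{thm:htop} since $n\ge5$ forces $g\ge2$. For the central $\Z$-extension the Gysin sequence degenerates at the top degree to an isomorphism $H_{\lfloor n/2\rfloor}(\Bur_n;\Z)\cong H_{\cd(Q)}(Q;\Z)$. When $n$ is odd this is $H_{g-1}(\SI(S_g);\Z)$ and we are done. When $n$ is even, the Lyndon--Hochschild--Serre spectral sequence of $1\to F\to Q\to\SI(S_g)\to1$ has no room for differentials into or out of its top corner, so $H_{g}(Q;\Z)\cong H_{g-1}(\SI(S_g);H_1(F))$; because $F$ is point-pushing in a punctured \emph{sphere}, its abelianization $H_1(F)$ is generated by loops about the punctures, each of which is fixed by $\SI(S_g)$, so $H_1(F)$ is a trivial module and $H_{g}(Q;\Z)\cong H_{g-1}(\SI(S_g);\Z)^{\oplus(2g+1)}$ is infinitely generated. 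I expect the main obstacle to be the careful construction of these \emph{symmetric} capping and Birman exact sequences---in particular verifying that the central twist and the point-pushing subgroup $F$ land in $\ker$ of the Burau representation and identifying the quotients correctly---together with the parity bookkeeping that reconciles both cases with the single formula $\lfloor n/2\rfloor$.
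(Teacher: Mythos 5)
Your odd-$n$ argument is essentially sound (the degenerate corner of the spectral sequence for a central $\Z$-extension is a fine substitute for the splitting $\Bur_{2g+1}\cong\SI(S_g)\times\Z$ that the paper quotes), but the even case contains a genuine error. You claim that the kernel $F$ of $Q\to\SI(S_g)$ is the \emph{full} symmetric point-pushing subgroup of the orbit $\{p,\iota p\}$, a free group of rank $2g+1$, and that it lies in (the image of) $\Bur_n$. This is false, and the reason is visible in your own first paragraph: $\Bur_n$ is the kernel of the action on $H_1(\Sigma;\Z)$, which has rank $n-1=2g+1$, strictly larger than $H_1(S_g;\Z)\cong\Z^{2g}$. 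The extra class is the relative one --- this is exactly why the paper defines $\SI(S_g,P)$ in Section~\ref{sec:top} using $H_1(S_g,P;\Z)$ rather than $H_1(S_g;\Z)$. A symmetric push of the pair $P$ along a loop $\gamma$ acts trivially on $H_1(S_g;\Z)$, but it changes the class of an arc joining the two points of $P$ by $\pm 2[\gamma]$ (the two points are pushed along $\gamma$ and $s(\gamma)$, and $s_*=-1$), so it lies in $\Bur_n$ only when $[\gamma]=0$. Hence the correct kernel is not your rank-$(2g+1)$ group $F$ but the subgroup of pushes along null-homologous loops, a free group of \emph{infinite} rank. This matches the paper's identification: $\Bur_{2g+2}\cong\Stab_{\SI(S_{g+1})}(v)\times\Z$, and the splitting of Section~\ref{sec:top} (applied with $g+1$ in place of $g$) gives $\Stab_{\SI(S_{g+1})}(v)\cong\SI(S_g)\ltimes K$ with $K$ an infinite rank free group.

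This misidentification breaks your homology computation. With the correct kernel $K$, the conjugation module $H_1(K;\Z)$ is neither finitely generated nor a trivial $\SI(S_g)$-module, so the conclusion $H_g(Q;\Z)\cong H_{g-1}(\SI(S_g);\Z)^{\oplus(2g+1)}$ does not follow. The corner term is $H_{g-1}(\SI(S_g);H_1(K;\Z))$, and proving that \emph{this} is infinitely generated is precisely the delicate step that the paper carries out in its proof of Main Theorem~\ref{thm:htop}: one uses the Johnson homomorphism to locate a trivial $\Z$-submodule $A=\langle[T_c]\rangle\subseteq H_1(K;\Z)$ generated by the class of a symmetric separating twist, observes that $H_{g-1}(\SI(S_g);A)\cong H_{g-1}(\SI(S_g);\Z)\otimes A$ is infinitely generated, and then uses the long exact sequence in homology together with the vanishing $H_g(\SI(S_g);-)=0$ (which follows from $\cd(\SI(S_g))=g-1$, Main Theorem~\ref{thm:cd}) to inject it into $H_{g-1}(\SI(S_g);H_1(K;\Z))$. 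In other words, the paper's route --- quote the isomorphisms $\Bur_{2g+1}\cong\SI(S_g)\times\Z$ and $\Bur_{2g+2}\cong\Stab_{\SI(S_{g+1})}(v)\times\Z$, then feed in the already-established stabilizer computation and apply the K\"unneth formula --- is not just bookkeeping; it is how the nontrivial module structure gets handled. Your upper bound $\cd(\Bur_n)\le g+1$ survives the correction (still $\cd(K)=1$), but your lower bound also needs repair: the point-push you adjoin to the abelian subgroup must itself lie in $\Bur_n$, hence must be a push along a null-homologous loop; alternatively the lower bound follows from the corrected homology statement, since $H_d(G;\Z)\neq 0$ forces $\cd(G)\geq d$.
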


Our approaches to proving our main theorems are modeled on the arguments of the paper by Bestvina--Bux--Margalit \cite{bbm}.  On the other hand, some of the details are more subtle in the present situation, and we place most of our emphasis on these points.

\p{Acknowledgments.} We would like to thank Joan Birman for comments on an earlier draft.


\section{The complex of symmetric cycles}
\label{sec:sb}

Our main theorems will be proven by analyzing the action of $\SI(S_g)$ on a contractible complex $\SB_x(S_g)$, which we define in this section.  This complex is a symmetric version of the complex of minimizing cycles introduced by Bestvina--Bux--Margalit \cite{bbm}.  

Fix some nonzero $x \in H_1(S_g;\Z)$.  The complex $\SB_x(S_g)$ will be defined as a certain set of isotopy classes of 1-cycles in $S_g$ representing $x$.  The complex does depend on the choice of $x$ (there are finitely many isomorphism types of complexes for the infinitely many choices of $x$), but the main feature of $\SB_x(S_g)$, its contractibility, will not depend on $x$.

A \emph{1--cycle} in $S_g$ is a finite formal sum
\[ \sum k_ic_i \]
where $k_i \in \R$, and each $c_i$ is an oriented simple closed curve in $S_g$;
the set $\{c_i: k_i \neq 0\}$ is called the \emph{support}.  We say
that the 1-cycle is \emph{simple} if the curves of the support are
pairwise disjoint, and we say that it is \emph{positive} if each $k_i$
is positive.  

Let $\S$ denote the set of isotopy classes of oriented simple closed curves in $S_g$.  We may regard the isotopy class of a simple, positive 1--cycle in $S_g$ as an element of $\R_{\geq 0}^\S$, the space of functions $\S \to \R_{\geq 0}$.

A 1-cycle is \emph{skew-symmetric} if its support is fixed as a set by $s$ but has its orientation reversed by $s$.

A \emph{skew-symmetric pair of curves} in $S_g$ is a pair of disjoint, oriented simple closed curves in $S_g$ interchanged and reversed by $s$, that is, a pair of disjoint, oriented curves of the form $\{c,-s(c)\}$.  Both curves in a skew-symmetric pair must be nonseparating.  This follows, for example, from the fact that $s$ acts by $-I$ on $H_1(S_g;\Z)$.

A \emph{skew-symmetric multicurve} in $S_g$ is a nonempty collection of skew-symmetric pairs of curves in $S_g$ that are homotopically nontrivial, pairwise disjoint, and pairwise non-homotopic.  Note that a skew-symmetric multicurve has no connected components that are preserved by $s$.  Also, two simple closed curves lying in a given skew-symmetric multicurve can only be isotopic only if they lie in the same skew-symmetric pair.

A \emph{basic skew-symmetric cycle} is a positive, skew-symmetric 1-cycle
\[ \sum_{i=1}^{n} \frac{k_i}{2}(c_i-s(c_i)) \]
where the support $\{c_i,-s(c_i)\}$ is a skew-symmetric multicurve, and where the $[c_i]$ form a linearly independent subset of $H_1(S_g;\R)$.  

Let $\SM$ denote the set of isotopy classes of skew-symmetric multicurves in $S_g$ that are unions of supports of basic skew-symmetric cycles representing $x$.

Let $M = \{c_1,-s(c_1),\dots,c_m,-s(c_m)\}$ be a skew-symmetric multicurve whose isotopy class $[M]$ lies in $\SM$.  The set
\begin{align*}
P_M = \left \{ (k_1,\dots,k_m) \in \R_{\geq 0}^m :  \sum_{i=1}^m \frac{k_i}{2}(c_i-s(c_i)) \text{ is a skew-symmetric} \right. \\ \left. \text{1-cycle representing } x \right \}  
\end{align*}
is a convex polytope in $\R_{\geq 0}^m$.  Indeed, it is the convex hull of the points corresponding to basic skew-symmetric cycles representing $x$.  The faces of $P_M$ correspond exactly to skew-symmetric multicurves $M' \subseteq M$ with $[M'] \in \SM$.  

The cell complex $\SB_x(S_g)$ is defined as follows: the set of cells is
\[  \{P_M : [M] \in \SM \}. \]
We identify two cells if they are equal in $\R_{\geq 0}^\S$ and endow the quotient with the weak topology.  We refer to $\SB_x(S_g)$ as the \emph{complex of symmetric cycles}.

\begin{theorem}
\label{b contract}
Let $g \geq 1$, and let $x \in H_1(S_g;\Z)$ be any primitive element.  The complex $\SB_x(S_g)$ is contractible.
\end{theorem}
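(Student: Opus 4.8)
The plan is to adapt, equivariantly, the Bestvina--Bux--Margalit proof that the non-symmetric complex of minimizing cycles $\B_x(S_g)$ is contractible; since the authors state that their arguments are modeled on \cite{bbm}, this is the natural route. First I would record the structural facts that make such an argument possible. The complex is nonempty: a primitive class $x$ is represented by a nonseparating simple closed curve, and from the symmetric structure one extracts a skew-symmetric pair carrying $x$, so that $\SM \neq \emptyset$. Each cell $P_M$ is, by construction, a convex polytope, and the homology class of a cycle is a \emph{linear} function of its coefficients; hence any convex combination $(1-t)z_0 + t z_1$ of skew-symmetric cycles representing $x$ is again a skew-symmetric cycle representing $x$. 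Consequently a straight-line homotopy between two points of $\SB_x(S_g)$ stays in the complex as soon as their supports together form a skew-symmetric multicurve, and the \emph{only} obstruction to contracting the complex is that the supports of distinct cells need not be simultaneously disjoint.

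To remove that obstruction I would introduce an $s$-equivariant \emph{combine-and-resolve} operation. Given cycles $z_0, z_1 \in \SB_x(S_g)$ supported on skew-symmetric multicurves $M_0, M_1$, I would realize $M_0 \cup M_1$ with minimal, $s$-invariant intersection and perform the oriented (Seifert) smoothing at each intersection point. Because each $z_i$ is a positive oriented cycle, oriented resolution preserves the homology class, and iterating it terminates in a \emph{simple} skew-symmetric cycle representing $x$ after discarding any null-homotopic components and amalgamating homotopic ones. This provides, for any finite collection of cells, a single cell $P_N$ into which all of them can be pushed, and the contraction of $\SB_x(S_g)$ is then assembled from these moves exactly as in \cite{bbm}: any map $S^k \to \SB_x(S_g)$ has image in a finite subcomplex, involving only finitely many multicurves; combining them produces a cell $P_N$ receiving a resolved image of the whole subcomplex, and $P_N$ is convex, hence contractible, so the map is nullhomotopic.

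The main obstacle --- and the only place where the argument genuinely departs from the non-symmetric case --- is maintaining skew-symmetry at every stage of the resolution. Two points must be controlled. First, the smoothings at the two points of an $s$-orbit of crossings must be chosen compatibly, which is delicate near the $2g+2$ fixed points of $s$, where $s$ acts locally by a rotation and can interchange the branches at a crossing; the cleanest way to manage this is to pass to the quotient orbifold $S_g/s \cong S^2$ with $2g+2$ marked points, carry out the resolution downstairs, and lift. Second, I must verify that the resolved multicurve is genuinely skew-symmetric in the required sense --- that no $s$-invariant component is created (an $s$-invariant curve is disallowed in a skew-symmetric multicurve and could a priori arise from smoothing the two curves of a pair against each other near a Weierstrass point) --- and that its support lies in $\SM$, that is, is a union of supports of basic skew-symmetric cycles whose homology classes are linearly independent. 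Establishing that the downstairs resolution always lifts to such a configuration is the technical heart of the proof, and is where I expect essentially all of the work to concentrate.
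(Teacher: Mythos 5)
Your high-level strategy---equivariantly adapting the Bestvina--Bux--Margalit contractibility argument and isolating ``does the surgery preserve skew-symmetry?'' as the point to check---is exactly the route the paper takes: its proof of Theorem~\ref{b contract} is a citation of \cite[Theorem E]{bbm} and \cite[Proposition 7]{igen} together with the remark that the only new content is that the functions Drain and Surger from those proofs preserve skew-symmetry, which is easy. The problem is that your reconstruction of what the cited argument actually does contains a step that fails. You claim that oriented resolution pushes any finite collection of cells into a \emph{single} cell $P_N$, which is then convex, hence contractible. This is false: the support of the resolution of a weighted combination of cycles depends on the weights, not just on the union of the supports. Concretely, take two nonseparating curves $a$ and $a'$ in the class $x$ that intersect; the oriented resolution of $(1-t)a + t a'$ is $\min(t,1-t)$ times the oriented smoothing of $a \cup a'$ plus $|1-2t|$ times $a$ (for $t<1/2$) or $a'$ (for $t>1/2$), so as $t$ varies the resolved cycles sweep through several distinct cells and never lie in one polytope $P_N$. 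What resolution really produces is a map into a \emph{union} of cells (those carried by a common train track), and proving that this union is contractible, that the resolution map is continuous across cell boundaries, and that it is homotopic to the inclusion \emph{inside the complex} (the straight line from a cycle to its resolution passes through non-simple cycles, hence leaves $\SB_x(S_g)$ entirely) is precisely the content of the Drain/Surger machinery of \cite{bbm}; it cannot be replaced by convexity of one cell. So the phrase ``assembled from these moves exactly as in \cite{bbm}'' is carrying the entire proof, and the mechanism you attribute to \cite{bbm} is not the one that works.

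You have also inverted where the difficulty sits. The equivariance issues you call ``the technical heart'' are the easy part, exactly as the paper asserts. Curves in skew-symmetric multicurves are disjoint from the fixed points of $s$, so after a symmetric perturbation all intersection points lie in free $s$-orbits of size two; since $s$ reverses the orientations of \emph{both} strands at a crossing, it carries the oriented smoothing at one point of an orbit to the oriented smoothing at the other, so the resolution of an $s$-anti-invariant cycle is $s$-anti-invariant. Moreover no component $d$ of the result can satisfy $-s(d)=d$: an involution of a circle reversing orientation has fixed points, which would force $d$ through a Weierstrass point, and no component can satisfy $s(d)=d$ with orientation preserved because all weights are positive. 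Thus the repair is either to import the proof of \cite[Theorem E]{bbm} and \cite[Proposition 7]{igen} as a black box and verify, along the lines just sketched, that Drain and Surger preserve skew-symmetry---which is the paper's proof---or to genuinely redo their continuity-and-retraction argument, which is substantially more than the one-cell convexity argument you describe.
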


Bestvina--Bux--Margalit studied a complex $\B_x(S_g)$ on which $\SB_x(S_g)$ is modeled.  Theorem~\ref{b contract} can be proven in the same way as the contractibility of $\B_x(S_g)$; see \cite[Theorem E]{bbm} and \cite[Proposition 7]{igen}.  The only thing to check is that their functions $\textrm{Drain}$ and $\textrm{Surger}$ preserve skew-symmetry.  But this is easy to verify.  Thus, we do not repeat the proof.

The quotient map $S_g \to S_g / \langle s \rangle$ is a branched cover of $S_g$ over a sphere $S_{0,2g+2}$ with $2g+2$ \emph{cone points} of order two, namely, the images of the $2g+2$ fixed points of $s$.

For our purposes, the cone points are simply marked points; we only use this terminology to distinguish these $2g+2$ points from other marked points.  When we discuss simple closed curves (and homotopies of curves) in $S_{0,2g+2}$, we treat cone points (and all marked points) as if they are punctures.  So, for instance, curves are not allowed to pass through cone points.

The image of any skew-symmetric multicurve $M$ under the quotient $S_g \to S_{0,2g+2}$ is an unoriented multicurve $\overline M$ in $S_{0,2g+2}$, that is, a collection of essential, pairwise disjoint, pairwise nonhomotopic simple closed curves in $S_{0,2g+2}$.  Let $Z=Z(M)$ denote the number of components of $S_{0,2g+2}-\overline M$ that do not contain any of the $2g+2$ cone points, and let $P=P(M)$ denote the number of components that do contain cone points.

\begin{prop}
\label{cell dim}
For any $[M] \in \SM$, we have
\[ \dim(P_M) = Z.\]
\end{prop}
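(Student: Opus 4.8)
The plan is to reduce the computation of $\dim(P_M)$ to a purely homological one, and then to carry out that homological computation using the branched cover $S_g \to S_{0,2g+2}$.

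First I would translate the cycle condition into linear algebra. Since $s$ acts as $-I$ on $H_1(S_g;\R)$, we have $[s(c_i)] = -[c_i]$, so the homology class of $\sum_{i=1}^m \tfrac{k_i}{2}(c_i - s(c_i))$ is $\sum_{i=1}^m k_i[c_i]$. Thus $P_M$ is the intersection of $\R_{\geq 0}^m$ with the affine subspace $A_M = \{(k_1,\dots,k_m) : \sum_i k_i[c_i] = x\}$. Because $[M]\in\SM$, the multicurve $M$ is a union of supports of basic skew-symmetric cycles representing $x$; averaging these cycles produces a point of $P_M$ all of whose coordinates are positive, hence a point in the relative interior of $A_M$. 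Therefore $\dim(P_M) = \dim(A_M) = m - d$, where $d = \dim_\R \mathrm{span}\{[c_1],\dots,[c_m]\}$, and it remains to show that $d = m - Z$.

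Next I would compute $d$ with the long exact sequence of the pair $(S_g, M)$. Observe that $d$ is exactly the rank of the map $i_\ast\colon H_1(M;\R)\to H_1(S_g;\R)$, since the classes of the oriented components of $M$ span $\mathrm{span}\{[c_i]\}$ (again using $[s(c_i)]=-[c_i]$). Let $\widetilde N$ denote the number of components of $S_g\setminus M$. Using $H_2(M;\R)=0$, $H_2(S_g;\R)=\R$, and $H_2(S_g,M;\R)\cong\R^{\widetilde N}$ (one generator per complementary region, taken rel boundary), exactness identifies $\ker i_\ast$ with the image of the connecting map, which has dimension $\widetilde N - 1$. As $M$ has $2m$ components, this gives $d = 2m - (\widetilde N - 1)$, so it is enough to prove $\widetilde N = (m+1) + Z$.

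Finally I would count $\widetilde N$ through the cover. Downstairs, $\overline M$ is an $m$-component multicurve on the sphere, dividing it into $m+1$ regions, of which $P$ contain cone points and $Z$ do not. Each region $R$ containing a cone point lifts to a single connected (branched) region upstairs, while a cone-point-free region lifts to one or two regions according as the monodromy homomorphism $\pi_1(R)\to\Z/2$ of the double cover is nontrivial or trivial. The hard part will be showing that every cone-point-free region has trivial monodromy. Its fundamental group is generated by its boundary curves, each of which is some $\overline{c_i}$; the preimage of $\overline{c_i}$ in $S_g$ is the skew-symmetric pair $\{c_i,-s(c_i)\}$, a union of two disjoint circles, so the cover restricted to $\overline{c_i}$ is the trivial double cover and the corresponding boundary loop has trivial monodromy. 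Hence every cone-point-free region lifts to two regions and every region containing a cone point lifts to one, so $\widetilde N = P + 2Z = (m+1) + Z$. Combining the three steps gives $d = 2m - (m+Z) = m - Z$, and therefore $\dim(P_M) = m - d = Z$.
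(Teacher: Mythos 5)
Your proof is correct, and it takes a genuinely more self-contained route than the paper's. The paper argues in two steps: it cites Bestvina--Bux--Margalit (their Lemma 2.1) for the fact that the space of positive $1$-cycles representing $x$ and supported in $M$, with all $2|\overline M|$ coefficients varying independently, has dimension one less than the number of components of $S_g - M$; it then asserts that this number of components is $P+2Z$ and that imposing skew-symmetry introduces $|\overline M|$ \emph{independent} equations, giving $\dim(P_M) = (P+2Z-1)-|\overline M| = Z$ via the sphere count $|\overline M|+1 = P+Z$ (Lemma~\ref{other guy}). You instead build the skew-symmetry into the coordinates from the start, writing $P_M$ as the intersection of the orthant $\R_{\geq 0}^m$ with the affine subspace $\bigl\{\sum k_i[c_i] = x\bigr\}$, and you compute its codimension as the rank of $H_1(M;\R) \to H_1(S_g;\R)$ via the long exact sequence of the pair $(S_g,M)$ --- in effect re-proving, in the symmetric setting, the BBM lemma that the paper quotes. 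Two points the paper leaves as assertions are actually justified in your argument: the component count $P+2Z$ for $S_g - M$ (your monodromy argument, using that each $\overline{c_i}$ lifts to two disjoint circles and that planar regions have boundary-generated fundamental group; the paper relies on the connectivity observation that appears later inside Lemma~\ref{p}), and the independence of the symmetry equations, which in your formulation becomes the statement that $P_M$ has full dimension in its affine hull --- handled cleanly by your averaging construction of a point with all coordinates positive. The trade-off is the usual one: the paper's proof is three lines given the citation, while yours is longer but needs only covering-space theory and the homology of a pair, and it closes the small gaps the paper elides.
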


\begin{proof}

The dimension of the space of positive 1-cycles that represent $x$ and that are supported in $M$ is one fewer than the number of complementary components of $M$ in $S_g$ \cite[Lemma 2.1]{bbm}.  The number of such components is precisely $P+2Z$.  To obtain the dimension of $P_M$, we simply need to impose the condition of skew-symmetry.  This introduces $|\overline M|$ independent equations, namely, that the coefficients on the curves interchanged by $s$ must be equal.  Thus,
\[ \dim(P_M) = P+2Z-1 - |\overline M| = P+2Z-1-(P+Z-1) = Z,\]
as desired.
\end{proof}


\section{The Birman--Hilden theorem}

Let $\SHomeo^+(S_g)$ denote the group of orientation-preserving homeomorphisms of $S_g$ that commute with the hyperelliptic involution $s$.  We define the \emph{hyperelliptic mapping class group} $\SMod(S_g)$ to be the group of isotopy classes of elements of $\SHomeo^+(S_g)$.  We do not, a priori, require the isotopies to be $s$-equivariant.  Thus, $\SMod(S_g)$ is a subgroup of $\Mod(S_g)$.

There is a short exact sequence
\[ 1 \to \langle s \rangle \to \SHomeo^+(S_g) \to \Homeo^+(S_{0,2g+2}) \to 1. \]
This is useful because $S_{0,2g+2}$ is a simpler object than $S_g$.  As such, one would hope for an analogous short exact sequence on the level of mapping class groups.  Birman--Hilden proved that this is indeed the case \cite[Theorem 7]{bh}, that is, for $g \geq 2$, there is a short exact sequence:
\[ 1 \to \langle [s] \rangle \to \SMod(S_g) \to \Mod(S_{0,2g+2}) \to 1.\]
This theorem amounts to the fact that, if an element of $\SHomeo^+(S_g)$ is isotopic to the identity, then it is isotopic to the identity within $\SHomeo^+(S_g)$.

We require a souped-up version.  Let $P$ be a set of $2p$ marked points in $S_g$ and say that $s$ interchanges the points of $P$ in pairs.  Let $\overline P$ denote the image of $P$ in $S_{0,2g+2}$.  Let $\SMod(S_g,P)$ be the set of isotopy classes of orientation-preserving homeomorphisms of $S_g$ that commute with $s$ and preserve the set $P$.  Similarly, define $\Mod(S_{0,2g+2},\overline P)$ as the set of isotopy classes of orientation-preserving homeomorphisms of $S_{0,2g+2}$ that preserve the set of $2g+2$ cone points and preserve the set $\overline P$.

We have the following generalized short exact sequence, also due to Birman--Hilden \cite[Theorem 1]{bhannals}.

\begin{theorem}
\label{thm:bh}
Let $g \geq 1$.  If $g=1$, assume that $p > 0$.  There is a short exact sequence:
\[ 1 \to \langle [s] \rangle \to \SMod(S_g,P) \to \Mod(S_{0,2g+2},\overline P) \to 1. \]
\end{theorem}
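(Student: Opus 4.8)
The plan is to realize the desired sequence as the map on mapping class groups induced by the deck transformation of the double branched cover $p \colon S_g \to S_{0,2g+2}$, and then to control its kernel via the lifting of isotopies.

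First I would construct the right-hand map. An element of $\SHomeo^+(S_g)$ preserving $P$ commutes with $s$, hence descends to an orientation-preserving homeomorphism of the quotient $S_{0,2g+2}$ that preserves the $2g+2$ cone points (the images of the fixed points of $s$) and preserves $\overline P$ (the image of $P$). Since an $s$-equivariant isotopy descends to an isotopy downstairs, this passes to a well-defined homomorphism $\SMod(S_g,P) \to \Mod(S_{0,2g+2},\overline P)$. The generator $[s]$ covers the identity, so it lies in the kernel; it remains to prove surjectivity and that the kernel is exactly $\langle [s] \rangle$.

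For surjectivity I would invoke the lifting criterion for $p$. The cover is classified by the homomorphism $\phi \colon \pi_1(S_{0,2g+2} \setminus \{\text{cone points}\}) \to \Z/2$ sending each loop around a single cone point to the nontrivial element; because $2g+2$ is even, this is the unique such homomorphism. A homeomorphism $f$ preserving the cone points sends a small loop around a cone point to a conjugate of such a loop, so $\phi \circ f_* = \phi$, and hence $f$ lifts. The two lifts $\tilde f$ and $s\tilde f$ are orientation-preserving, each preserve $P = p^{-1}(\overline P)$, and each commute with $s$ (both $\tilde f s$ and $s\tilde f$ cover $f$, and the lifts of $f$ form a single coset of $\langle s \rangle$, forcing $\tilde f s = s \tilde f$). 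Thus $\tilde f \in \SHomeo^+(S_g,P)$ projects to $f$, giving surjectivity.

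The main obstacle is the kernel, which is precisely the \emph{Birman--Hilden isotopy-lifting phenomenon}. Suppose $\tilde f \in \SHomeo^+(S_g,P)$ projects to an $f$ that is isotopic to the identity through homeomorphisms fixing the cone points and $\overline P$ setwise. I would lift this isotopy to $S_g$: deleting the marked points identifies the situation with the double cover $S_g \setminus P \to S_{0,2g+2} \setminus \overline P$ of punctured surfaces, branched only over the $2g+2$ cone points, to which the isotopy-lifting machinery of Birman--Hilden \cite{bh, bhannals} applies. The lifted isotopy carries $\tilde f$ to a lift of the identity, necessarily $\mathrm{id}$ or $s$, so $[\tilde f] \in \langle [s] \rangle$. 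The delicate point is that the isotopy-lifting conclusion requires the covered surface to be ``large'' enough to have no exceptional fiber-preserving symmetries; this is exactly why the genus-$1$ case must assume $p>0$, since the punctures render $S_1 \setminus P$ hyperbolic and bring it within the scope of the theorem, whereas the closed torus falls outside it.
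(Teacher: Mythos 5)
There is a genuine gap, and it sits exactly where the entire content of the Birman--Hilden theorem lives: the well-definedness of the map $\SMod(S_g,P) \to \Mod(S_{0,2g+2},\overline P)$. Recall the paper's definition: $\SMod(S_g,P)$ consists of isotopy classes of symmetric homeomorphisms, where the isotopies are \emph{not} required to be $s$-equivariant. Your justification --- ``an $s$-equivariant isotopy descends to an isotopy downstairs'' --- only shows that the projection is constant on \emph{equivariant} isotopy classes. To get a map defined on $\SMod(S_g,P)$ you must know that if a symmetric homeomorphism is isotopic to the identity through arbitrary (non-equivariant) homeomorphisms preserving $P$, then its projection is isotopic to the identity in $\Mod(S_{0,2g+2},\overline P)$; equivalently, a symmetric homeomorphism isotopic to the identity is isotopic to the identity \emph{within} $\SHomeo^+(S_g,P)$. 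This is precisely the hard statement that the theorem ``amounts to'' (in the paper's words), and it is exactly what fails in the excluded case $g=1$, $p=0$: the rotation $\phi$ by $\pi$ in one circle factor of $T^2$ is symmetric and isotopic to the identity, yet $\overline\phi$ permutes the cone points nontrivially, so the map is not even well defined on $\SMod(T^2)$.

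By contrast, the step you single out as ``the main obstacle'' --- computing the kernel by lifting an isotopy of the base --- is the routine direction: an isotopy of $S_{0,2g+2}$ fixing the cone points and $\overline P$ lifts, by ordinary covering-space theory applied to the punctured surfaces, to an equivariant isotopy upstairs ending at a deck transformation, and this argument works even for the closed torus. So your proposal invokes the Birman--Hilden machinery on the easy direction, while the genuinely delicate direction is dispatched with a one-line argument that does not apply to the paper's definition of $\SMod(S_g,P)$. What you have correctly assembled (construction of the map, surjectivity via the lifting criterion, kernel via isotopy lifting) is a proof of the exact sequence with $\pi_0\bigl(\SHomeo^+(S_g,P)\bigr)$ --- equivariant isotopy classes --- in the middle; the missing ingredient is the identification of that group with $\SMod(S_g,P)$, which is Birman--Hilden's theorem and is what the paper cites rather than proves.
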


Theorem~\ref{thm:bh} does not hold as stated for the case where $g=1$ and $p=0$.  Indeed, consider the element $\phi$ of $\SHomeo^+(T^2)$ that is rotation by $\pi$ in one of the two circle factors.  Let $\overline \phi$ denote the image of $\phi$ in $\Homeo^+(S_{0,4})$.  The mapping class $[\phi]$ is trivial, but the mapping class $[\overline \phi]$ is nontrivial, as it induces a nontrivial permutation of the cone points of $S_{0,4}$.  Thus, we do not have a natural well-defined map $\SMod(T^2) \to \Mod(S_{0,4})$.

We can, however, modify Theorem~\ref{thm:bh} in the case $g=1$, $p=0$.  First of all, each element of $\Mod(T^2)$ has a (linear) representative that commutes with $s$, and so $\SMod(T^2) \cong \Mod(T^2)$.  Second, there is a non-canonical isomorphism $\Mod(T^2) \to \Mod(T^2,p)$, where $p$ is one of the fixed points of $s$.  The reason for this is that each element of $\Mod(T^2)$ has a (linear) representative that fixes the image of the origin under the covering map $\R^2 \to T^2$.

Let $\overline p$ denote the image of $p$ in $S_{0,4}$, and let $\Mod(S_{0,4}, \overline p)$ denote the subgroup of $\Mod(S_{0,4})$ consisting of elements that fix the marked point $\overline p$.  We have the following special case of the Birman--Hilden theorem.

\begin{theorem}
\label{thm:bh special}
There is a short exact sequence:
\[ 1 \to \langle [s] \rangle \to \SMod(T^2) \to \Mod(S_{0,4},\overline p) \to 1. \]
\end{theorem}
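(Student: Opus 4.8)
The plan is to reduce to the \emph{marked} double cover $(T^2,p)\to(S_{0,4},\overline p)$ and apply the Birman--Hilden theorem there; the whole point of marking the fixed point $p$ is that it removes the degeneracy responsible for the failure of Theorem~\ref{thm:bh} when $g=1$ and $p=0$. As observed just before the statement, $\SMod(T^2)=\Mod(T^2)$ and the forgetful map gives a (non-canonical) isomorphism $\Mod(T^2)\cong\Mod(T^2,p)$; the same linear-representative argument shows $\SMod(T^2,p)=\Mod(T^2,p)$, since a linear map fixing the origin $p$ commutes with $s=-I$ and fixes $p$, and such maps represent all of $\Mod(T^2,p)$. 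Composing, we obtain an isomorphism $\SMod(T^2)\cong\SMod(T^2,p)$ carrying $[s]$ to $[s]$. It therefore suffices to establish the short exact sequence
\[ 1 \to \langle [s] \rangle \to \SMod(T^2,p) \to \Mod(S_{0,4},\overline p) \to 1, \]
and then transport it back along this isomorphism.

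To build the map I would first work at the level of homeomorphisms. Since $p$ is a fixed point of $s$ it is a ramification point of the cover and $\overline p$ is a cone point; an element of $\SHomeo^+(T^2,p)$ commutes with $s$, hence permutes the four ramification points while fixing $p$, so it descends to a homeomorphism of $S_{0,4}$ that preserves the cone points and fixes $\overline p$. As $s$ itself fixes $p$, restricting the homeomorphism-level short exact sequence $1\to\langle s\rangle\to\SHomeo^+(T^2)\to\Homeo^+(S_{0,4})\to 1$ to the subgroups fixing $p$ and $\overline p$ yields
\[ 1 \to \langle s \rangle \to \SHomeo^+(T^2,p) \to \Homeo^+(S_{0,4},\overline p) \to 1. \]
Surjectivity on mapping class groups then follows from the fact that the hyperelliptic cover is canonical: any homeomorphism of $S_{0,4}$ preserving the four branch points lifts to a fiber-preserving homeomorphism of $T^2$, and since $p$ is the unique preimage of $\overline p$ such a lift fixes $p$.

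The main obstacle, and the only place where the hypothesis $g=1$ really intervenes, is the well-definedness of the descent on mapping classes together with the computation that its kernel is exactly $\langle[s]\rangle$. This is precisely the step that fails in the unmarked case: there the obstruction is supplied by the half-period translations $\tau_v$ (those with $2v=0$), which commute with $s$ and are isotopic to the identity of $T^2$ yet descend to nontrivial permutations of the cone points. Each such $\tau_v$ moves the origin and so is excluded once $p$ is marked. Structurally, the reason marking $p$ repairs the argument is that the punctured surface $T^2\setminus\{p\}$ is hyperbolic, i.e.\ $\chi(T^2\setminus\{p\})=-1<0$; equivalently, the base orbifold becomes a sphere with three cone points of order two and one puncture, which has negative orbifold Euler characteristic, whereas the unmarked base is the Euclidean pillowcase. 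I would therefore invoke the Birman--Hilden equivariant isotopy theorem \cite{bhannals} for this hyperbolic marked cover: it guarantees that a fiber-preserving homeomorphism of $(T^2,p)$ that is isotopic rel $p$ to the identity is in fact fiber-preserving isotopic to the identity, so that the descent is well defined on mapping classes and has kernel exactly $\langle[s]\rangle$. Combining well-definedness, surjectivity, and the kernel computation gives the displayed sequence, and transporting along the isomorphism of the first step completes the proof.
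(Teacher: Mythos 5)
Your proposal is correct and takes essentially the same route as the paper: the paper likewise reduces to the marked fixed point $p$ via the linear-representative isomorphisms $\SMod(T^2) \cong \Mod(T^2) \cong \Mod(T^2,p)$ and then presents the sequence as a special case of the Birman--Hilden theorem \cite{bhannals} applied to the marked cover $(T^2,p) \to (S_{0,4},\overline p)$. Your extra details (the homeomorphism-level sequence, surjectivity via lifting, the exclusion of half-period translations, and the hyperbolicity of $T^2$ minus a point) simply fill in what the paper delegates to that citation.
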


Note that, in the statement of Theorem~\ref{thm:bh special}, the group $\Mod(S_{0,4},\overline p)$ is a subgroup of $\Mod(S_{0,4})$, not $\Mod(S_{0,5})$, since $\overline p$ is already a cone point of $S_{0,4}$.


\section{Cohomological dimension}

In this section, we prove Main Theorem~\ref{thm:cd}, which states that $\cd(\SI(S_g)) = g-1$.  We start by showing that $\cd(\SI(S_g)) \geq g-1$ (Proposition~\ref{prop:cd lower}).  

We will use the following fact \cite[Chapter VIII, Proposition 2.4]{ksb}.

\begin{fact}
\label{cd sub}
If $H$ is a subgroup of a group $G$, then $\cd(H) \leq \cd(G)$.
\end{fact}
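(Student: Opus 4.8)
The plan is to show that every nonzero cohomology group of $H$ is mirrored by a cohomology group of $G$ in the same degree, so that the supremum defining $\cd(H)$ cannot exceed the one defining $\cd(G)$. The key input is the coinduction functor $\mathrm{Coind}_H^G(-) = \mathrm{Hom}_{\Z H}(\Z G, -)$, which takes an $H$-module to a $G$-module, together with Shapiro's lemma.

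First I would recall Shapiro's lemma: for any $H$-module $N$ there is a natural isomorphism $H^n(G; \mathrm{Coind}_H^G N) \cong H^n(H; N)$ for all $n \geq 0$. Granting this, suppose $n$ is realized for $H$, i.e.\ there is an $H$-module $N$ with $H^n(H; N) \neq 0$. Setting $M = \mathrm{Coind}_H^G N$, Shapiro's lemma gives $H^n(G; M) \cong H^n(H; N) \neq 0$, so $n \leq \cd(G)$. Taking the supremum over all such $n$ yields $\cd(H) \leq \cd(G)$, as desired.

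The one point that requires justification is Shapiro's lemma itself, and this is where the hypothesis $H \leq G$ enters. The mechanism is that $\Z G$ is a \emph{free} $\Z H$-module: choosing a set of representatives for the right cosets $H \backslash G$ exhibits $\Z G$ as a direct sum of copies of $\Z H$. Consequently the restriction to $\Z H$ of any projective (resp.\ free) $\Z G$-module is again projective (resp.\ free), so a projective resolution $P_\bullet \to \Z$ over $\Z G$ restricts to a projective resolution over $\Z H$. One then checks that $\mathrm{Hom}_{\Z G}(P_\bullet, \mathrm{Coind}_H^G N) \cong \mathrm{Hom}_{\Z H}(P_\bullet, N)$ as cochain complexes, via the adjunction between restriction and coinduction, and passing to cohomology gives the asserted isomorphism.

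I expect the freeness of $\Z G$ over $\Z H$ to be the crux: it is what makes the restriction of a $\Z G$-projective resolution usable for computing $H$-cohomology and what powers the adjunction underlying Shapiro's lemma. Alternatively, one could bypass Shapiro's lemma entirely and argue directly: if $\cd(G) = n < \infty$, then $\Z$ admits a length-$n$ projective resolution over $\Z G$, which by the freeness observation is also a length-$n$ projective resolution over $\Z H$, forcing $\cd(H) \leq n$; the case $\cd(G) = \infty$ is trivial. This is essentially the argument recorded in \cite[Chapter VIII, Proposition 2.4]{ksb}, and it is the route I would follow.
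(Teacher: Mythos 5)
Your proof is correct, and it matches the source the paper relies on: the paper does not prove this fact itself but cites Brown (Chapter VIII, Proposition 2.4), whose argument is precisely the direct one you sketch at the end --- a length-$n$ projective resolution of $\Z$ over $\Z G$ restricts to a projective resolution over $\Z H$ because $\Z G$ is free as a $\Z H$-module. Your Shapiro's lemma packaging is an equivalent route resting on the same freeness observation, so there is no substantive difference in approach.
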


\begin{prop}
\label{prop:cd lower}
For $g \geq 1$, we have $\cd(\SI(S_g)) \geq g-1$.
\end{prop}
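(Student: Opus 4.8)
The plan is to exhibit a subgroup of $\SI(S_g)$ isomorphic to $\Z^{g-1}$ and then invoke Fact~\ref{cd sub} together with the standard computation $\cd(\Z^{g-1}) = g-1$. For $g=1$ the claimed bound $\cd(\SI(S_1)) \geq 0$ is automatic (indeed $\SI(S_1)$ is trivial), so I assume $g \geq 2$ throughout.

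First I would fix a convenient model for the pair $(S_g, s)$: realize $S_g$ as a chain of $g$ tori glued in a row, with $s$ given by rotation through angle $\pi$ about the central axis of the chain (equivalently, the standard hyperelliptic model with $2g+2$ fixed Weierstrass points). In this model there are $g-1$ ``neck'' curves $\delta_1, \dots, \delta_{g-1}$, where $\delta_i$ separates the first $i$ tori from the remaining $g-i$. Each $\delta_i$ is a round circle lying in a plane perpendicular to the axis, so $s(\delta_i) = \delta_i$. These curves are pairwise disjoint, pairwise non-isotopic, and separating.

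Next I would verify that the Dehn twists $T_{\delta_1}, \dots, T_{\delta_{g-1}}$ all lie in $\SI(S_g)$. Since each $\delta_i$ is separating, $T_{\delta_i}$ acts trivially on $H_1(S_g;\Z)$, so $T_{\delta_i} \in \I(S_g)$. Since $s$ is orientation-preserving and $s(\delta_i) = \delta_i$, we have $s T_{\delta_i} s^{-1} = T_{s(\delta_i)} = T_{\delta_i}$, so $T_{\delta_i}$ commutes with $s$ and hence represents an element of $\SMod(S_g)$. Together these give $T_{\delta_i} \in \SI(S_g)$. Because the $\delta_i$ are disjoint and pairwise non-isotopic, the twists commute and the subgroup $\langle T_{\delta_1}, \dots, T_{\delta_{g-1}} \rangle$ is free abelian of rank $g-1$ (the standard fact that Dehn twists along a collection of disjoint, pairwise non-isotopic curves generate a free abelian group of the corresponding rank). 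Applying Fact~\ref{cd sub} to this inclusion $\Z^{g-1} \leq \SI(S_g)$ then yields $\cd(\SI(S_g)) \geq g-1$.

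The only genuinely geometric point, and hence the main thing to get right, is the choice of model guaranteeing that the $g-1$ separating neck curves are truly symmetric (that is, $s$-invariant up to isotopy) while remaining pairwise non-isotopic; once symmetry is confirmed, membership in $\SI(S_g)$ and the freeness of the resulting abelian subgroup are routine. I note that this construction also foreshadows the stronger Proposition~\ref{prop:cd sn lower}, since with a little additional care the same twists can be arranged to lie in the deeper terms $\SN_k(S_g)$ of the hyperelliptic Johnson filtration.
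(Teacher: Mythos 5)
Your proposal is correct and follows exactly the paper's argument: both exhibit $g-1$ disjoint, non-isotopic, separating, $s$-invariant curves whose Dehn twists generate a $\Z^{g-1}$ subgroup of $\SI(S_g)$, then apply Fact~\ref{cd sub} with $\cd(\Z^{g-1})=g-1$. You simply spell out the details (the chain-of-tori model and the verification that the twists lie in $\SI(S_g)$) that the paper leaves implicit or delegates to its references.
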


\begin{proof}

We can find a collection of $g-1$ mutually disjoint, essential, homotopically distinct, separating simple closed curves in $S_g$ that are fixed by $s$.  The Dehn twists about these curves generate a subgroup of $\SI(S_g)$ that is isomorphic to $\Z^{g-1}$ \cite[Lemma 3.17]{fm}.  It is a basic fact that $\cd(\Z^n) = n$ for any $n \geq 0$; see \cite[Section VIII.2]{ksb}.  Applying Fact~\ref{cd sub}, we deduce the desired lower bound.
\end{proof}

We now give a variant of Proposition~\ref{prop:cd lower} which, together with our Main Theorem~\ref{thm:cd}, gives Theorem~\ref{thm:sn}.

\begin{prop}
\label{prop:cd sn lower}
For $g \geq 1$ and $k \geq 1$, we have $\cd(\SN_k(S_g)) \geq g-1$.
\end{prop}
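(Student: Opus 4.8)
The plan is to mirror the proof of Proposition~\ref{prop:cd lower}: it suffices to exhibit a subgroup of $\SN_k(S_g)$ isomorphic to $\Z^{g-1}$, since then Fact~\ref{cd sub} together with $\cd(\Z^{g-1}) = g-1$ yields the bound. The genuinely new difficulty, compared with Proposition~\ref{prop:cd lower}, is that the required elements must now lie in $\N_k(S_g)$ for an \emph{arbitrary} $k$. The Dehn twists about symmetric separating curves used in Proposition~\ref{prop:cd lower} lie in $\K(S_g) = \N_3(S_g)$ but no deeper, so these twists alone cannot suffice once $k \geq 4$; we must instead produce $s$-symmetric mapping classes lying arbitrarily deep in the Johnson filtration while still spanning a free abelian group of the full rank $g-1$.

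My approach is to symmetrize the construction underlying Farb's lower bound $\cd(\N_k(S_g)) \geq g-1$ \cite{farb}. The essential tool is that the Johnson filtration is central, i.e. $[\N_i(S_g), \N_j(S_g)] \leq \N_{i+j}(S_g)$, so that iterated commutators of Torelli elements descend to any prescribed depth in the filtration. Concretely, I would build a rank-$(g-1)$ free abelian subgroup of $\N_k(S_g)$ out of twists and iterated commutators supported on a configuration of simple closed curves, and then arrange for every curve in that configuration to be invariant under the hyperelliptic involution $s$, using the symmetric chain of Proposition~\ref{prop:cd lower} as the model for making such choices. Since a product or commutator of homeomorphisms commuting with $s$ again commutes with $s$, the entire subgroup then lies in $\SMod(S_g)$, hence in $\SMod(S_g) \cap \N_k(S_g) = \SN_k(S_g)$. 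This is exactly the style of argument used for Theorem~\ref{b contract}, where one takes an existing construction and checks only that it respects the symmetry.

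The main obstacle is to verify that this symmetrization can be carried out without losing rank: one must choose the underlying curves $s$-invariantly so that the resulting deep elements both pairwise commute and remain linearly independent, all the way up to the full rank $g-1$. The tension is that deep Torelli elements cannot be localized on the genus-one pieces cut off by a symmetric chain, as such pieces are too simple to support nontrivial Torelli elements; hence the supports of the $g-1$ deep elements cannot all be made disjoint in the naive way, and commutativity is no longer automatic. I expect to resolve this either by passing to the quotient sphere $S_{0,2g+2}$, where the $s$-invariant curves become curves enclosing odd numbers of cone points and the bookkeeping is cleaner, or by choosing $s$-invariant subsurfaces meeting only in annuli. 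Independence of the $g-1$ resulting classes would then be confirmed by evaluating an appropriate Johnson homomorphism $\tau_k$, equivalently by examining their images in $\N_k(S_g)/\N_{k+1}(S_g)$ and exhibiting a rank-$(g-1)$ lattice. Once commutativity and independence are established, $s$-symmetry is automatic from the $s$-invariant choices, and the proof concludes as above.
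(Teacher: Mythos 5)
Your overall strategy is the paper's: produce a rank-$(g-1)$ free abelian subgroup of $\SN_k(S_g)$ by symmetrizing Farb's lower-bound argument for $\N_k(S_g)$, and then apply Fact~\ref{cd sub} together with $\cd(\Z^{g-1})=g-1$. You also correctly identify the two key ingredients: separating twists only reach $\N_3(S_g)$, so one needs elements lying arbitrarily deep in the filtration, and the property $[\N_i(S_g),\N_j(S_g)]\leq\N_{i+j}(S_g)$ is what produces them. However, there is a genuine gap at exactly the point you flag as the ``main obstacle,'' and the obstacle itself is a misdiagnosis. You argue that, since the genus-one handles cut off by the symmetric curves $c_i$ carry no Torelli elements other than powers of $T_{c_i}$, the $g-1$ deep elements ``cannot all be made disjoint in the naive way,'' and you then defer the construction to two unexecuted alternatives (bookkeeping in the quotient sphere $S_{0,2g+2}$, or subsurfaces meeting in annuli) plus an unexecuted independence check via higher Johnson homomorphisms. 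None of this is carried out, so no subgroup of $\SN_k(S_g)$ of rank $g-1$ is actually exhibited; the proposal ends where the proof needs to begin.

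In fact, disjointness of supports is achievable by a one-line enlargement of the handles, and this is what the paper does: choose disjoint nonseparating curves $a_1,\dots,a_{g-1}$, each fixed by $s$, with $i(a_i,c_i)=2$ and $i(a_i,c_j)=0$ for $i\neq j$, and set $d_i=T_{a_i}(c_i)$. Then $c_i$ and $d_i$ are symmetric separating curves with $i(c_i,d_i)=4$, both contained in a neighborhood of $c_i\cup a_i$, and these neighborhoods are pairwise disjoint. Farb's argument applied inside each $\langle T_{c_i},T_{d_i}\rangle$ yields a nontrivial $\gamma_{i,k}\in\N_k(S_g)$, which is symmetric because $T_{c_i}$ and $T_{d_i}$ are. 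Commutativity and rank are then automatic: the $\gamma_{i,k}$ have infinite order and are supported on pairwise disjoint subsurfaces, so they generate a copy of $\Z^{g-1}$. No evaluation of images in $\N_k(S_g)/\N_{k+1}(S_g)$ is needed --- and verifying nonvanishing of such images for iterated commutators, as you propose, would be substantially harder than the problem itself, since one does not control exactly how deep in the filtration these elements sit.
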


\begin{proof}

Let $c_1, \dots, c_{g-1}$ denote the separating simple closed curves from the proof of Proposition~\ref{prop:cd lower}.  We can find disjoint nonseparating simple closed curves $a_1, \dots, a_{g-1}$ fixed by $s$ and with the properties that the geometric intersection numbers $i(a_i,c_i)$ are all equal to 2 and $i(a_i,c_j)=0$ for $i \neq j$.  For each $i$, define $d_i = T_{a_i}(c_i)$, where $T_{a_i}$ is the Dehn twist about $a_i$.  By construction, each $d_i$ is fixed by $s$.  Also, we have $i(c_i,d_i)=4$ and $i(c_i,d_j)=0$ when $i \neq j$.

Fix some $k \geq 1$ and some $i$.  As in Farb's proof of the lower bound $\cd(\N_k(S_g)) \geq g-1$ \cite[Theorem 5.10]{farb}, some element $\gamma_{i,k}$ of the group $\langle T_{c_i} , T_{d_i} \rangle$ lies in $\N_k(S_g)$.  Since each $\gamma_{i,k}$ lies in $\SMod(S_g)$, the group $\langle \gamma_{1,k} , \dots, \gamma_{g-1,k} \rangle$ lies in $\SN_k(S_g)$.  Since the $\gamma_{i,k}$ all have infinite order and are supported on pairwise disjoint subsurfaces of $S_g$, we in fact see that this group is a free abelian group of rank $g-1$.  The proposition now follows from Fact~\ref{cd sub}.
\end{proof}

We now aim to show that $\cd(\SI(S_g)) \leq g-1$ (Proposition~\ref{prop:cd upper}).  Our basic tool is the following fact \cite[Section VIII.2, Exercise 4]{ksb}.

\begin{prop}
\label{prop:quillen}
Suppose that a group $G$ acts on a contractible cell complex $X$.  We have
\[ \cd(G) \leq \sup_{\tau} \{ \cd(\Stab_G(\tau)) + \dim(\tau) \} \]
where the supremum is taken over all cells $\tau$ of $X$.
\end{prop}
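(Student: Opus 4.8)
The plan is to exploit the contractibility of $X$ to build an equivariant cohomology spectral sequence converging to $H^*(G;M)$ whose $E_1$-page is assembled from the cohomology of the cell stabilizers, and then to read off a vanishing range from the hypotheses. First I would note that, since $X$ is contractible, its augmented cellular chain complex
\[ \cdots \to C_2(X) \to C_1(X) \to C_0(X) \to \Z \to 0 \]
is exact, and so is a resolution of the trivial module $\Z$ by $\Z G$-modules. Each chain group $C_p(X)$ is the free abelian group on the $p$-cells, with $G$ permuting them, so choosing a set $\Sigma_p$ of representatives for the $G$-orbits of $p$-cells yields
\[ C_p(X) \cong \bigoplus_{\sigma \in \Sigma_p} \Z[G/\Stab_G(\sigma)], \]
with each summand possibly twisted by the orientation character of the stabilizer. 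Passing to the barycentric subdivision of $X$ one may arrange that every element stabilizing a cell fixes it pointwise, which removes the twisting while changing neither $G$, the dimensions of cells, nor the relevant stabilizers.

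Next I would splice this resolution together with a free resolution $F_\bullet \to \Z$ over $\Z G$: the total complex of $F_\bullet \otimes_\Z C_\bullet(X)$, with diagonal action, is a free resolution of $\Z$, so $\Hom_{\Z G}(F_\bullet \otimes C_\bullet(X), M)$ is a double complex computing $H^*(G;M)$. Filtering by the $X$-degree and taking cohomology in the group direction first, Shapiro's lemma evaluates each induced (coinduced) summand, giving a spectral sequence
\[ E_1^{p,q} = \prod_{\sigma \in \Sigma_p} H^q(\Stab_G(\sigma); M) \Longrightarrow H^{p+q}(G;M), \]
where each coefficient module is the restriction of $M$ to the stabilizer. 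This is precisely the content of the exercise in \cite{ksb}, so the genuine work lies in recording these identifications rather than in inventing the machinery.

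Finally I would run the dimension count. By the definition of cohomological dimension, $H^q(\Stab_G(\sigma);N)=0$ for every $\Stab_G(\sigma)$-module $N$ once $q > \cd(\Stab_G(\sigma))$, and $C_p(X)=0$ once $p > \dim(X)$. Hence $E_1^{p,q}=0$ unless there is a $p$-cell $\sigma$ with $q \leq \cd(\Stab_G(\sigma))$, in which case
\[ p+q \leq \dim(\sigma) + \cd(\Stab_G(\sigma)) \leq \sup_{\tau}\{\cd(\Stab_G(\tau)) + \dim(\tau)\}. \]
Each $E_\infty^{p,q}$ is a subquotient of $E_1^{p,q}$, and $H^n(G;M)$ is assembled from the terms $E_\infty^{p,q}$ with $p+q=n$; therefore $H^n(G;M)=0$ whenever $n$ exceeds that supremum. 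Taking the supremum over all $M$ gives $\cd(G)\leq \sup_{\tau}\{\cd(\Stab_G(\tau)) + \dim(\tau)\}$, as desired.

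The only step requiring care is the construction of the spectral sequence together with the bookkeeping of orientation characters on cells whose stabilizers reverse them; the vanishing argument itself is then immediate. Since twisting a stabilizer module by a character cannot raise its cohomology above $\cd(\Stab_G(\sigma))$, the orientation issue never affects the final bound, which is why barycentric subdivision, or simply retaining the twist in the vanishing estimate, suffices.
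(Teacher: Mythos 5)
Your proof is correct, and since the paper gives no argument of its own here---it cites this statement as an exercise in Brown's book (Section VIII.2, Exercise 4 of \cite{ksb})---your construction is precisely the intended solution to that exercise: splice the augmented cellular chain complex of $X$ with a free resolution, identify the $E_1$-page via Shapiro's lemma as $E_1^{p,q} = \prod_{\sigma \in \Sigma_p} H^q(\Stab_G(\sigma); M_\sigma)$ converging to $H^{p+q}(G;M)$, and read off the vanishing from $p+q \leq \dim(\sigma) + \cd(\Stab_G(\sigma))$. The one small inaccuracy is harmless: barycentric subdivision does change stabilizers (a simplex of the subdivision corresponds to a chain of cells of $X$, so its stabilizer is the intersection of the stabilizers along the chain), but this only shrinks them while keeping $\cd(\Stab) + \dim$ bounded by the original supremum, and in any case your fallback---retaining the orientation-twisted module $M_\sigma$ in the vanishing estimate---is the cleaner route, since $\cd(\Stab_G(\sigma))$ bounds cohomology with arbitrary coefficient modules, twisted or not.
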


Of course, we will apply Proposition~\ref{prop:quillen} to the case of the $\SI(S_g)$ action on the complex of symmetric cycles $\SB_x(S_g)$.

\subsection{The Birman exact sequence and dimension} 

Let $S_{g,n}$ denote a closed, connected, orientable surface of genus $g$ with $n>0$ marked points.  The group $\Mod(S_{g,n})$ is the group of isotopy classes of orientation-preserving homeomorphisms of $S_g$ that preserve the set of $n$ marked points.

Assume $2g+n >3$.  Denote the $n$th marked point of $S_{g,n}$ by $p$, and let $\Mod(S_{g,n},p)$ denote the subgroup of $\Mod(S_{g,n})$ preserving $p$.  There is a natural map $\Mod(S_{g,n},p) \to \Mod(S_{g,n-1})$ obtained by forgetting that $p$ is marked.  The Birman exact sequence \cite[Section 1]{jb} identifies the kernel:
\[ 1 \to \pi_1(S_{g,n-1},p) \to \Mod(S_{g,n},p) \to \Mod(S_{g,n-1}) \to 1. \]
Let $\PMod(S_{g,n})$ denote the subgroup of $\Mod(S_{g,n})$ consisting of elements that induce the trivial permutation of the marked points.  We also have the restriction:
\[ 1 \to \pi_1(S_{g,n-1},p) \to \PMod(S_{g,n}) \to \PMod(S_{g,n-1}) \to 1. \]
We would like to use the Birman exact sequence to gain information about the cohomology of $\Mod(S_{g,n})$ and its subgroups.  The key is the following fact \cite[Chapter VIII, Proposition 2.4]{ksb}.

\begin{fact}
\label{cd ses}
Suppose we have a short exact sequence of groups
\[ 1 \to K \to G \to Q \to 1. \]
Then $\cd(G) \leq \cd(K) + \cd(Q)$.
\end{fact}

\begin{prop}
\label{cd pmod}
For $n \geq 3$ we have
\[ \cd(\PMod(S_{0,n})) \leq n-3. \]
\end{prop}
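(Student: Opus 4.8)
The plan is to induct on $n$ using the pure version of the Birman exact sequence, with the base case handled by a direct dimension count. The key observation is that $\PMod(S_{0,3})$ is trivial (the pure mapping class group of a thrice-punctured sphere is trivial), so $\cd(\PMod(S_{0,3})) = 0 = 3-3$, which gives the base case $n=3$.

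For the inductive step, suppose the bound holds for $n-1$, where $n \geq 4$. I would apply the restricted Birman exact sequence to the pure mapping class group, designating one marked point $p$ to forget:
\[ 1 \to \pi_1(S_{0,n-1},p) \to \PMod(S_{0,n}) \to \PMod(S_{0,n-1}) \to 1. \]
Here the condition $2g+n > 3$ needed to invoke the Birman exact sequence is satisfied since $g=0$ and $n \geq 4$. By Fact~\ref{cd ses}, we obtain
\[ \cd(\PMod(S_{0,n})) \leq \cd(\pi_1(S_{0,n-1},p)) + \cd(\PMod(S_{0,n-1})). \]
The surface $S_{0,n-1}$ is a sphere with $n-1 \geq 3$ punctures, so it is homotopy equivalent to a wedge of $n-2$ circles; its fundamental group is therefore free of rank $n-2$, and a free group of rank $\geq 1$ has cohomological dimension $1$. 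Thus $\cd(\pi_1(S_{0,n-1},p)) = 1$. Combining this with the inductive hypothesis $\cd(\PMod(S_{0,n-1})) \leq (n-1)-3 = n-4$ yields
\[ \cd(\PMod(S_{0,n})) \leq 1 + (n-4) = n-3, \]
as desired.

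The only point requiring a little care is verifying that $\pi_1(S_{0,n-1})$ is a free group of positive rank so that its cohomological dimension is exactly $1$ rather than $0$; this is where the hypothesis $n \geq 3$ (hence $n-1 \geq 2$ punctures, giving free rank $\geq 1$) is used, and it is the reason the bound $n-3$ rather than something larger emerges. There is no serious obstacle here — the argument is an entirely routine induction once the Birman exact sequence and Fact~\ref{cd ses} are in hand — so I expect the proof to be short, with the main subtlety being simply to confirm that the punctured sphere is aspherical with free fundamental group so that Fact~\ref{cd ses} applies cleanly.
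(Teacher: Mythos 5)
Your proposal is correct and follows essentially the same argument as the paper: the base case that $\PMod(S_{0,3})$ is trivial, followed by induction using the pure Birman exact sequence, Fact~\ref{cd ses}, and the fact that the kernel $\pi_1(S_{0,n-1})$ is free of cohomological dimension at most $1$. The only difference is that you spell out the rank of the free group, which is not needed since the upper bound $\cd \leq 1$ suffices for the inequality.
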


\begin{proof}

The group $\PMod(S_{0,3})$ is trivial \cite[Proposition 2.3]{fm}, and hence it has cohomological dimension 0.  Since $\pi_1(S_{0,n})$ is a free group, it has cohomological dimension (at most) 1.  The proposition then follows by applying the Birman exact sequence and Fact~\ref{cd ses} inductively.
\end{proof}

In the case of $g \geq 1$, we will require the following more delicate bound on cohomological dimension.  As above, $\PMod(S_g,P)$ is the group of isotopy classes of homeomorphisms of $S_g$ fixing each point in $P$.

\begin{prop}
\label{cd bes}
Let $g \geq 1$.  Suppose $P$ is a set of $p$ pairs of marked points in $S_g$, where the points in each pair are identified by $s$.  Let $H$ be some subgroup of $\SMod(S_g)$ with $[s] \notin H$.  Let $F : \SMod(S_g,P) \cap \PMod(S_g,P) \to \SMod(S_g)$ be the forgetful map, and let $G$ be a subgroup of $F^{-1}(H)$.  Then $\cd(G) \leq \cd(H) + p$.
\end{prop}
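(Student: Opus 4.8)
The plan is to induct on $p$, the number of pairs of marked points, using a symmetric version of the Birman exact sequence together with Fact~\ref{cd ses}. The base case $p=0$ is trivial: here $F$ is the identity-like inclusion and $G \leq H$, so $\cd(G) \leq \cd(H)$ by Fact~\ref{cd sub}. For the inductive step, the idea is to forget one pair of marked points at a time. Because the marked points come in $s$-paired fashion and we work inside $\SMod$, forgetting a single pair $\{q, s(q)\}$ should drop the cohomological dimension by at most $1$ rather than $2$; the hyperelliptic symmetry ties the two forgotten points together, so the kernel of the symmetric forgetful map is (virtually) a free group of cohomological dimension at most $1$, not a rank-two object.

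Concretely, I would first set up the correct short exact sequence. Forgetting the pair downstairs, via the Birman--Hilden correspondence of Theorem~\ref{thm:bh} (with parameter $p$) and Theorem~\ref{thm:bh} again with parameter $p-1$, one passes to the quotient orbifold $S_{0,2g+2}$ with the images $\overline P$ of the marked points. There the ordinary Birman exact sequence applies: forgetting one marked point $\overline q$ of $S_{0,2g+2+2p}$ yields a kernel $\pi_1(S_{0,2g+2+2(p-1)}, \overline q)$, which is free and hence has $\cd \leq 1$. Pulling this back through the two Birman--Hilden sequences (whose kernels are the finite group $\langle [s]\rangle$, contributing nothing to $\cd$ since we have arranged $[s] \notin H$ and pass to subgroups where $[s]$ is absent), I obtain a short exact sequence
\[
1 \to K \to G \to G' \to 1
\]
where $\cd(K) \leq 1$ and $G'$ is a subgroup of the corresponding preimage $F'^{-1}(H)$ for the $(p-1)$-marked-point problem. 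Applying Fact~\ref{cd ses} gives $\cd(G) \leq \cd(K) + \cd(G') \leq 1 + \bigl(\cd(H) + (p-1)\bigr) = \cd(H) + p$, completing the induction.

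The main obstacle, and where I would spend most of the care, is verifying that the symmetric structure really collapses the two forgotten points into a single Birman fiber of cohomological dimension $\leq 1$, rather than a product of two such fibers (which would give the weaker bound $\cd(H) + 2p$). This is exactly the point where the hypothesis $[s] \notin H$ and the restriction to $\PMod$ matter: because every homeomorphism we consider commutes with $s$, the motion of $q$ determines the motion of $s(q)$, so the genuine freedom is one-dimensional per pair. I would make this precise by working on the quotient $S_{0,2g+2}$ throughout, where each $s$-pair upstairs is a single marked point downstairs, and applying the ordinary (unsymmetrized) Birman exact sequence there one point at a time; the Birman--Hilden theorem then guarantees that this downstairs analysis lifts faithfully to the statement about $G \leq F^{-1}(H)$ upstairs. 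A secondary technical point to check is that the subgroup $G'$ of the $(p-1)$-problem continues to satisfy the inductive hypotheses (in particular that the relevant involution is still absent), which follows since $[s] \notin H$ is preserved under passing to subquotients of $F^{-1}(H)$.
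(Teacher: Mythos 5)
Your proposal is correct and takes essentially the same route as the paper's proof: use Birman--Hilden (valid because $[s] \notin H$ and, for $p>0$, $[s] \notin \PMod(S_g,P)$, so the relevant groups map isomorphically to their images downstairs) to work on the quotient sphere, where each $s$-pair collapses to a single marked point, then apply the ordinary Birman exact sequence once per pair together with Fact~\ref{cd ses} and the fact that punctured-sphere fundamental groups are free of cohomological dimension $1$, finishing with Fact~\ref{cd sub} to pass to the subgroup $G$. The only blemishes are cosmetic: the quotient surface carries $2g+2+p$ marked points, not $2g+2+2p$ (consistent with your own correctly stated key observation that pairs become single points downstairs), and when $g=1$ the step with no remaining marked points requires Theorem~\ref{thm:bh special} rather than Theorem~\ref{thm:bh}; neither affects the argument.
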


\begin{proof}

Let $\overline P$ denote the image of $P$ in $S_{0,2g+2}$.  Since $[s] \notin \PMod(S_g,P)$, the Birman--Hilden theorem (Theorems~\ref{thm:bh} and~\ref{thm:bh special}) implies that the groups $F^{-1}(H)$ and $H$ are identified isomorphically with their images in $\Mod(S_{0,2g+2},\overline P)$ and $\Mod(S_{0,2g+2})$, respectively.  Applying the Birman exact sequence inductively, and using Fact~\ref{cd ses} and the fact that $\cd(\pi_1(S_{0,n})) = \cd(F_{n-1}) = 1$, we obtain $\cd(F^{-1}(H)) \leq \cd(H) + p$.  By Fact~\ref{cd sub}, we have $\cd(G) \leq \cd(F^{-1}(H))$, and the proposition follows.
\end{proof}


\subsection{Dimensions of cell stabilizers}

In this section, we fix some $g \geq 2$ and we fix some skew-symmetric multicurve $M$ with $[M] \in \SM$.  The stabilizer of $[M]$ in $\SI(S_g)$ is exactly the stabilizer of the cell $P_M \subseteq \SB_x(S_g)$ in $\SI(S_g)$.

As above, we denote the image of $M$ in $S_g/\langle s\rangle \cong S_{0,2g+2}$ by $\overline M$.  Say that $S_{0,2g+2}-\overline M$ has $P$ connected components that contain some of the $2g+2$ cone points and $Z$ components that do not contain any cone points.  Denote these subsurfaces by $\overline R_1, \dots, \overline R_P$ and $\overline R_{P+1},\dots, \overline R_{P+Z}$, respectively.

Say that $\overline R_i$ contains $k_i$ cone points and that the preimage $R_i$ of $\overline R_i$ in $S_g$ has genus $g_i$.  Denote the number of components of $\overline M$ in the boundary of $\overline R_i$ by $p_i$, so each $\overline R_i$ is homeomorphic to a sphere with $k_i$ cone points and $p_i$ punctures.  For our purposes, punctures play the same role as marked points.

\begin{lemma}
\label{p}
Let $1 \leq i \leq P$.  Then $R_i$ is homeomorphic to $S_{g_i,2p_i}$, where $g_i = (k_i-2)/2$.
\end{lemma}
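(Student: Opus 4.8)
The plan is to study the restriction of the branched cover $S_g \to S_{0,2g+2}$ to the subsurface $\overline R_i$ and to read off the homeomorphism type of the preimage $R_i$ from a Riemann--Hurwitz computation. First I would observe that $R_i \to \overline R_i$ is itself a double cover, branched precisely over the $k_i$ cone points lying in $\overline R_i$, each with ramification index $2$ (these are fixed points of $s$, where $s$ acts locally as a rotation by $\pi$). For the Euler characteristic bookkeeping it is cleanest to treat the cone points as honest interior points of the underlying surfaces rather than as punctures, so that their preimages are ordinary interior points of $R_i$.

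Two structural facts then pin down the data of $R_i$. For connectedness: since $i \leq P$ we have $k_i \geq 1$, so $\overline R_i$ contains at least one branch point, and the monodromy of the cover around such a point is the nontrivial element of $\langle s \rangle$; hence the monodromy homomorphism out of $\pi_1$ of $\overline R_i$ minus its cone points is nontrivial, and $R_i$ is connected. For the boundary count: each of the $p_i$ boundary curves of $\overline R_i$ is a component of $\overline M$, whose preimage in $S_g$ is by definition a skew-symmetric pair of curves, so the double cover is trivial over each boundary circle and every such curve lifts to two curves. Thus $R_i$ has exactly $2p_i$ boundary components. As a consistency check, reading the single relation in $\pi_1$ of the planar surface $\overline R_i$ shows that the sum of the monodromies vanishes in $\Z/2$, which forces $k_i$ to be even and hence guarantees that the genus computed below is a nonnegative integer.

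Finally I would carry out the Euler characteristic computation. Since $\overline R_i$ has genus $0$ with $p_i$ boundary components, $\chi(\overline R_i) = 2 - p_i$, and Riemann--Hurwitz for a degree-$2$ cover ramified at $k_i$ points gives $\chi(R_i) = 2\chi(\overline R_i) - k_i = 4 - 2p_i - k_i$. On the other hand, $R_i$ is a connected orientable surface of genus $g_i$ with $2p_i$ boundary components, so $\chi(R_i) = 2 - 2g_i - 2p_i$. Equating these two expressions and solving yields $g_i = (k_i - 2)/2$, whence $R_i \cong S_{g_i, 2p_i}$, as claimed. The only genuinely substantive point is the boundary-versus-branch bookkeeping of the previous paragraph — that the curves of $\overline M$ lift with trivial monodromy to pairs while the cone points are genuine branch points with nontrivial monodromy — since this is exactly what determines both the boundary count $2p_i$ and the connectedness of $R_i$; everything else is a direct Riemann--Hurwitz calculation.
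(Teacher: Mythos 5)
Your proof is correct and follows essentially the same route as the paper: compute $\chi(R_i)$ twice, once via the degree-two branched cover and once via the formula $2-2g_i-2p_i$, then solve for $g_i$. The only cosmetic difference is that you use classical Riemann--Hurwitz with the cone points as branch points where the paper uses orbifold Euler characteristic and its multiplicativity (the same computation), and you spell out the connectedness and boundary-lifting facts that the paper asserts more briefly.
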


\begin{proof}

By the Riemann--Hurwitz formula \cite[Section 7.2.2]{fm}, the orbifold Euler characteristic of $\overline R_i$ is
\[ \chi(\overline R_i) = 2 - p_i - k_i/2. \]
Since orbifold Euler characteristic is multiplicative under orbifold covering maps, we have
\[ \chi(R_i) = 4 - 2p_i - k_i. \]
Now, to each curve of $\overline M$, there corresponds exactly two curves of $M$.  Therefore, $R_i$ has $2p_i$ punctures.  Also, when $k_i >0$, the cover $R_i$ has one connected component.  Plugging the last two facts into the general formula $\chi(S_{g,n}) = 2-2g-n$, we obtain a second formula for the Euler characteristic of $R_i$:
\[ \chi(R_i) = 2-2g_i-2p_i. \]
Combining our two formulas for $\chi(R_i)$, we find that $g_i = (k_i-2)/2$.
\end{proof}

\begin{lemma}
\label{some guy}
We have
\[ \sum_{i=1}^P g_i = g-P+1. \]
\end{lemma}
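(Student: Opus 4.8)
The plan is to reduce the identity to a simple cone-point count by summing the genus formula already established in Lemma~\ref{p}. First I would recall that Lemma~\ref{p} gives $g_i = (k_i-2)/2$ for each $i$ in the range $1 \leq i \leq P$. Summing this relation over all such $i$ produces
\[ \sum_{i=1}^P g_i = \sum_{i=1}^P \frac{k_i - 2}{2} = \frac{1}{2}\sum_{i=1}^P k_i - P, \]
so the whole lemma will follow once I can evaluate $\sum_{i=1}^P k_i$.

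The key step is to observe that the $2g+2$ cone points of $S_{0,2g+2}$ are distributed among the connected components of $S_{0,2g+2} - \overline M$, with each cone point lying in exactly one component. By the very definition of the indexing, the components $\overline R_{P+1}, \dots, \overline R_{P+Z}$ are precisely those containing no cone points, so every cone point must lie in one of $\overline R_1, \dots, \overline R_P$. Since $k_i$ counts the cone points in $\overline R_i$, this gives $\sum_{i=1}^P k_i = 2g+2$. Substituting into the displayed expression yields $\sum_{i=1}^P g_i = \tfrac{1}{2}(2g+2) - P = g - P + 1$, as claimed.

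There is essentially no obstacle here once Lemma~\ref{p} is in hand; the only thing requiring genuine attention is the bookkeeping in the cone-point count, namely confirming that the $\overline R_i$ with $1 \leq i \leq P$ account for \emph{all} of the cone points (no cone point is stranded on a curve of $\overline M$, since curves are treated as punctures that the cone points avoid, and none lie in the remaining $Z$ components by construction). Everything else is a one-line substitution.
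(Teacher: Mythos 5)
Your proof is correct and matches the paper's own argument exactly: both sum the formula $g_i = (k_i-2)/2$ from Lemma~\ref{p} over $1 \leq i \leq P$ and use the fact that all $2g+2$ cone points lie in the components $\overline R_1, \dots, \overline R_P$, so that $\sum_{i=1}^P k_i = 2g+2$. Your extra remark justifying the cone-point count is a fine (if slightly more careful) elaboration of what the paper takes as given.
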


\begin{proof}

Combining Lemma~\ref{p} with the fact that $\sum k_i = 2g+2$, we have
\[ \sum_{i=1}^P g_i = \sum_{i=1}^P \frac{k_i-2}{2} = \left(\frac{1}{2} \sum_{i=1}^P k_i\right) -P = \frac{2g+2}{2} - P = g-P+1. \]
\end{proof}

\begin{lemma}
\label{other guy}
We have
\[ |\overline M| + 1 = P + Z. \]
\end{lemma}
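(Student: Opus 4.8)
The plan is to recognize the claimed identity as a purely topological counting fact: a sphere cut along $m$ disjoint simple closed curves falls into exactly $m+1$ pieces. By definition, $P$ and $Z$ count the components of $S_{0,2g+2}-\overline M$ that do, respectively do not, contain cone points, so $P+Z$ is simply the total number of connected components of $S_{0,2g+2}-\overline M$. Since the $2g+2$ cone points are only finitely many marked points, removing them does not change the number of connected components of a $2$-manifold; hence $P+Z$ equals the number of components of the complement of $\overline M$ in the underlying sphere $S^2$.

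First I would establish the elementary fact that $m=|\overline M|$ disjoint simple closed curves cut $S^2$ into $m+1$ regions, arguing by induction on $m$. The base case $m=0$ is clear, as $S^2$ is connected, giving $1=0+1$. For the inductive step, note that any simple closed curve in $S^2$ is separating, since $H_1(S^2;\Z)=0$, and more generally any simple closed curve contained in a planar subsurface is separating there. Thus, given a configuration of $m-1$ disjoint curves cutting $S^2$ into $m$ regions, the remaining curve of $\overline M$ lies entirely within one of these regions and splits it into two, raising the total count by exactly one, to $m+1$. Combining this count with the identification of $P+Z$ as the number of complementary components of $\overline M$ yields $|\overline M|+1=P+Z$, as desired. (Alternatively, the same count follows from additivity of Euler characteristic under cutting along circles, together with the observation that each resulting piece is a planar surface.)

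The computation is essentially routine; the only point requiring a word of care is the separation claim in the inductive step, namely that each newly added curve genuinely divides its host region into two pieces rather than failing to separate it. This is guaranteed by the planarity of the regions, which in turn follows from the ambient surface being a sphere. Note that no use is made of the curves of $\overline M$ being essential or pairwise non-homotopic; the identity holds for any collection of disjoint simple closed curves on $S^2$.
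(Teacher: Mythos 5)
Your proof is correct and follows the same approach as the paper: identify $P+Z$ as the total number of complementary components of $\overline M$ and invoke the fact that $m$ disjoint simple closed curves cut a sphere into $m+1$ regions. The only difference is that you supply an inductive proof of this last fact, which the paper simply states as standard.
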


\begin{proof}

The quantity on the right hand side is the total number of components of $S_{0,2g+2} - \overline M$.  Since $S_{0,2g+2}$ is a sphere, the number of complementary components is $|\overline M|+1$.
\end{proof}

Let $G(M)$ be the free abelian group generated by the Dehn twists in the curves of $M$.  

\begin{lemma}
\label{mtwist}
The group $G(M) \cap \SI(S_g)$ is trivial.
\end{lemma}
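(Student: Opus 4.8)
The plan is to use the two defining conditions of $\SI(S_g)=\SMod(S_g)\cap\I(S_g)$ separately. Write the curves of $M$ as the skew-symmetric pairs $\{c_i,-s(c_i)\}$, $i=1,\dots,m$, and let $\phi=\prod_{i=1}^m T_{c_i}^{a_i}T_{s(c_i)}^{b_i}$ be an arbitrary element of $G(M)$; recall that $G(M)$ is free abelian on the Dehn twists about these pairwise disjoint, pairwise non-isotopic curves. Suppose $\phi\in\SI(S_g)$. I would first exploit that $\phi$ commutes with $s$ up to isotopy: since $s$ is orientation-preserving, $sT_{c_i}s^{-1}=T_{s(c_i)}$, so conjugation by $s$ interchanges the two twists in each pair and $s\phi s^{-1}=\prod_i T_{s(c_i)}^{a_i}T_{c_i}^{b_i}$. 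Comparing exponents in the free abelian group $G(M)$, the relation $s\phi s^{-1}=\phi$ forces $a_i=b_i=:e_i$ for every $i$, so $\phi=\prod_i(T_{c_i}T_{s(c_i)})^{e_i}$.

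Next I would bring in the Torelli condition. Because $s$ acts as $-I$ on $H_1(S_g;\Z)$ we have $[s(c_i)]=-[c_i]$, and the transvection formula shows that $T_{c_i}$ and $T_{s(c_i)}$ induce the \emph{same} automorphism $v\mapsto v+\langle v,[c_i]\rangle[c_i]$ of $H_1(S_g;\Z)$. As all curves of $M$ are disjoint, the classes $[c_i]$ are pairwise orthogonal for the intersection form, so these transvections commute and $\phi$ acts by $v\mapsto v+2\sum_i e_i\langle v,[c_i]\rangle[c_i]$. Hence $\phi$ acts trivially on homology if and only if the self-adjoint operator $v\mapsto\sum_i e_i\langle v,[c_i]\rangle[c_i]$ vanishes, which (using nondegeneracy of the form to identify $H_1(S_g;\Q)$ with its dual) is equivalent to the single relation $\sum_i e_i\,[c_i]\odot[c_i]=0$ in $\mathrm{Sym}^2 H_1(S_g;\Q)$. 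The lemma therefore reduces to showing that the symmetric squares $[c_i]\odot[c_i]$ are linearly independent, for then all $e_i=0$ and $\phi$ is trivial.

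This last step is where I expect the real difficulty. The classes $[c_i]$ are \emph{not} themselves independent in general---indeed the positive-dimensional cells $P_M$ exist precisely because there are linear relations among them---so one cannot simply find a homology class dual to each $[c_i]$. Moreover independence of the squares does not follow from the classes being pairwise distinct: if $x_1,x_2,x_3$ are independent and $x_4=x_1+x_2$, $x_5=x_1+x_3$, $x_6=x_2+x_3$, $x_7=x_1+x_2+x_3$, then $x_1^{\odot2}+x_2^{\odot2}+x_3^{\odot2}-x_4^{\odot2}-x_5^{\odot2}-x_6^{\odot2}+x_7^{\odot2}=0$, so some genuinely geometric input is needed to exclude such configurations. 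The input is that the images $\overline c_i$ of the pairs in the branched cover $S_g\to S_{0,2g+2}$ are disjoint and pairwise non-isotopic, hence form a \emph{nested} (laminar) family of distinct, even-cardinality subsets of the $2g+2$ cone points; the configuration above is exactly one that cannot be realized by disjoint curves on the sphere.

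To finish, I would prove independence of the squares by induction on the nesting of the $\overline c_i$, pairing the relation $\sum_i e_i\,[c_i]\odot[c_i]=0$ against \emph{quadratic} test forms $Q_i(v)=\langle v,y_i\rangle\langle v,z_i\rangle$ built from lifts of arcs (or curves) in $S_{0,2g+2}$ joining a cone point enclosed by $\overline c_i$ to cone points outside it. When $\overline c_i$ has a cone point not lying inside any nested curve, a single squared pairing ($y_i=z_i$) is nonzero on $[c_i]$ and vanishes on the classes of all curves nested inside $\overline c_i$; ordering the curves by nesting then makes the matrix $\big(Q_i([c_j])\big)$ triangular with nonzero diagonal, forcing all $e_i=0$. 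The delicate point---and the crux of the whole lemma---is arranging these test forms compatibly with the nesting, in particular handling a curve whose enclosed cone points are exhausted by the curves nested inside it, where a genuine product of two distinct pairings is required for the triangular pattern to persist.
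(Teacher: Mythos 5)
Your first two steps are correct: the relation $s\phi s^{-1}=\phi$, compared inside the free abelian group $G(M)$, does force equal exponents on the two twists of each pair (with the small caveat that when $c_i\simeq s(c_i)$ the two twists coincide, so the exponent there is a single arbitrary integer rather than an even one), and the Torelli condition then becomes exactly the vanishing of $\sum_i e_i\,[c_i]\odot[c_i]$ in $\mathrm{Sym}^2H_1(S_g;\Q)$. Your example of distinct classes with dependent squares is also a genuine observation: it shows no purely algebraic finish is possible. But the proof stops precisely where the content of the lemma begins. Given your reduction, the linear independence of the squares $[c_i]\odot[c_i]$ is equivalent to the lemma itself, and for it you offer only a sketch---test forms from lifted arcs, a triangularity argument ordered by nesting---while explicitly flagging the hardest configuration (a curve whose enclosed cone points are exhausted by the curves nested inside it) as unresolved and calling it ``the crux of the whole lemma.'' A proof that defers its crux is a gap, not a proof. (The plan does appear completable: for each $\overline c_i$ one can construct two arcs, each crossing $\overline c_i$ exactly once, whose remaining crossings with $\overline M$ involve disjoint sets of curves, using pairwise non-isotopy of the $\overline c_i$ to rule out empty annular regions; the matrix $\bigl(Q_i([c_j])\bigr)$ is then diagonal with nonzero diagonal. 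But this induction over the nesting forest is the actual work, and it is absent.)

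The paper sidesteps all of this by invoking a stronger containment that you never use: $\SI(S_g)$ lies not merely in the Torelli group but in the Johnson kernel $\K(S_g)$ (as noted in the introduction, naturality of the Johnson homomorphism together with $s_*=-I$ forces $\tau$ to vanish on $\SI(S_g)$). Since every curve of a skew-symmetric multicurve is nonseparating, Theorem~A.1 of Bestvina--Bux--Margalit gives $G(M)\cap\K(S_g)=1$ at once, and the lemma follows in two lines. Note where the difficulty went: inside $G(M)$, the intersection with $\I(S_g)$ is generated by bounding-pair-type maps (there being no separating curves in $M$), and these are precisely the source of possible relations among the squares and precisely what is killed by passing from $\I(S_g)$ to $\K(S_g)$. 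So to repair your argument, either supply the missing induction, or replace the Torelli-only analysis with the observation $\phi\in\K(S_g)$ and quote (or reprove in the symmetric setting) the Bestvina--Bux--Margalit multicurve theorem.
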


\begin{proof}

Because $M$ contains no separating curves (see Section~\ref{sec:sb}), the intersection $G(M) \cap \K(S_g)$ is trivial \cite[Theorem A.1]{bbm}.  As in the introduction, $\SI(S_g) < \K(S_g)$.  The lemma follows.
\end{proof}

\begin{lemma}
\label{lemma:big computation}
Assume that Main Theorem~\ref{thm:cd} is true for all genera between 1 and $g-1$ inclusive.  We have
\[ \cd(\Stab_{\SI(S_g)}(M)) \leq g-1-Z. \]
\end{lemma}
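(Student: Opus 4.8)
The plan is to stabilize the cell $P_M$ by analyzing how $\Stab_{\SI(S_g)}(M)$ acts on the complementary pieces of $M$, reducing the stabilizer to a product of lower-genus hyperelliptic Torelli groups (to which we apply the inductive hypothesis) and some factors coming from marked points (to which we apply the Birman--Hilden-based bound of Proposition~\ref{cd bes}). First I would observe that an element of $\Stab_{\SI(S_g)}(M)$ need not fix each curve of $M$, but since it commutes with $s$ it permutes the skew-symmetric pairs; passing to a finite-index subgroup (which does not change cohomological dimension, by Fact~\ref{cd sub}) I may assume each component of $M$, hence each complementary subsurface $R_i$ and $\overline R_i$, is preserved. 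Working downstairs in $S_{0,2g+2}$, an element stabilizing $\overline M$ restricts to a mapping class of each $\overline R_i$, giving a homomorphism into $\prod_i \Mod(\overline R_i)$ whose kernel consists of boundary twists; the Torelli-type condition forces control on these twists via Lemma~\ref{mtwist}, so that $G(M)\cap\SI(S_g)$ being trivial lets me split off the pure-twist contribution cleanly.

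The core computation then decomposes $\cd(\Stab_{\SI(S_g)}(M))$ as a sum of contributions, one per complementary component, plus the dimension loss recorded by the twist group. For each component $\overline R_i$ containing cone points ($1 \le i \le P$), Lemma~\ref{p} identifies the preimage $R_i$ with $S_{g_i,2p_i}$, where $g_i=(k_i-2)/2$; the hyperelliptic involution restricts to a hyperelliptic involution on $R_i$, and the relevant subgroup of the stabilizer maps into the hyperelliptic Torelli group of $R_i$ with marked points. Here I would invoke the inductive hypothesis (Main Theorem~\ref{thm:cd} in genus $g_i < g$) to bound the genus-$g_i$ hyperelliptic Torelli contribution by $g_i - 1$, and use Proposition~\ref{cd bes} with its parameter $p = p_i$ to absorb the $2p_i$ marked points at a cost of $p_i$ each. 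For each of the $Z$ components with no cone points ($P < i \le P+Z$), the preimage is a pair of subsurfaces swapped by $s$, and the symmetric condition pins one to the other, so its stabilizer contribution is that of an ordinary (non-symmetric) mapping class group of a surface with boundary; I expect this to contribute at most its own cohomological dimension, accounting for the $-Z$ in the final bound.

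Assembling the estimate, I would sum $\sum_{i=1}^P (g_i - 1 + p_i)$ over the cone-point components and the contributions of the $Z$ cone-free components, then simplify using Lemma~\ref{some guy} ($\sum g_i = g-P+1$) and Lemma~\ref{other guy} ($|\overline M|+1 = P+Z$), together with $\sum p_i$ counting boundary incidences, to collapse everything to $g-1-Z$. The arithmetic should mirror the Euler-characteristic bookkeeping already set up in the three preparatory lemmas. The main obstacle, I expect, is not the numerology but the structural reduction: carefully justifying that the stabilizer really does split (up to finite index and up to the trivial twist group of Lemma~\ref{mtwist}) as the claimed product of lower-dimensional pieces, that the restricted involution on each $R_i$ is genuinely hyperelliptic so the inductive hypothesis applies, and that the marked-point bookkeeping feeding into Proposition~\ref{cd bes} matches the hypotheses there (in particular that $[s]$ does not lie in the relevant subgroup, which is where the skew-symmetry of $M$ and the Torelli condition must be used). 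The cone-free components, where $s$ swaps two homeomorphic halves rather than acting within a single piece, are the subtlest case and will require the most care.
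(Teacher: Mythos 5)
Your outline reproduces the paper's proof almost step for step: kill the twist group via Lemma~\ref{mtwist} so that the stabilizer injects into the mapping class group of the cut surface, map it into the product of per-component groups, bound the cone-point components by $g_i-1+p_i$ using the inductive hypothesis together with Proposition~\ref{cd bes}, bound the cone-free components by a pure punctured-sphere mapping class group, and collapse the sum using Lemmas~\ref{some guy} and~\ref{other guy}. Your picture of the cone-free case is also correct: since each curve of $\overline M$ has exactly two preimages, the cover over a cone-free $\overline R_i$ is trivial, so the preimage really is two swapped copies and the symmetric condition identifies the contribution with a subgroup of $\PMod(\overline R_i)$; for the arithmetic to come out to $g-1-Z$ you do need the precise bound $\cd(G_i)\leq p_i-3$ from Proposition~\ref{cd pmod}, which your ``at most its own cohomological dimension'' only gestures at.

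The one step that is wrong as written is your reduction to a finite-index subgroup preserving each component: you justify ``finite-index passage does not change cohomological dimension'' by Fact~\ref{cd sub}, but that fact gives only $\cd(H)\leq\cd(G)$ for $H\leq G$, which is the useless direction here --- you need to bound $\cd$ of the full stabilizer \emph{above} by that of the finite-index subgroup. The repair is Serre's theorem (cited in the paper's introduction): for a torsion-free group, cohomological dimension is unchanged by passing to finite-index subgroups, and $\Stab_{\SI(S_g)}(M)$ is torsion-free since $\SI(S_g)$ sits inside the Torelli group. The paper sidesteps the issue entirely by invoking a theorem of Ivanov: elements of the Torelli group cannot permute the components of $S_g-M$ nor their punctures, so no finite-index passage is needed at all; this is also exactly what guarantees that the image lands in the \emph{pure} mapping class groups $\PMod(R_i)$, which both Proposition~\ref{cd bes} and Proposition~\ref{cd pmod} require. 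So your argument is structurally sound and repairable within the paper's toolkit, but the justification you gave for this step does not work.
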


\begin{proof}

There is a short exact sequence
\[ 1 \to G(M) \to \Stab_{\Mod(S_g)}(M) \to \Mod(S_g-M) \to 1 \]
(see \cite[Proposition 3.20]{fm}).  Since $G(M) \cap \SI(S_g)$ is trivial (Lemma~\ref{mtwist}), $\Stab_{\SI(S_g)}(M)$ is isomorphic to its image $G$ in $\SMod(S_g-M)$.

By a theorem of Ivanov, each element of $G$ fixes each $R_i$ and fixes each puncture of each $R_i$ \cite[Theorem 3]{nvi2}.  Thus for each $i$ there is a well-defined map $\Stab_{\SI(S_g)}(M) \to \PMod(R_i)$; denote the image by $G_i$.  The group $G$ is contained in $\prod G_i$.  By Fact~\ref{cd ses}, $\cd(G) \leq \sum \cd(G_i)$.

We claim that
\begin{enumerate}
\item for $1 \leq i \leq P$, we have $\cd(G_i) \leq g_i - 1 + p_i$, and
\item for $P+1 \leq i \leq P+Z$, we have $\cd(G_i) \leq p_i-3$.
\end{enumerate}
We start with the first statement.  If $k_i=2$, then $p_i=1$ and $R_i$ is a sphere with two marked punctures, and so $\PMod(R_i)$ is trivial.  Thus, $G_i$ is trivial.  This means $\cd(G_i)$ is 0, which is certainly less than or equal to $g_i+1-p_i=0$.

Now assume $k_i > 2$, i.e., $g_i > 0$.  By filling in the $p_i$ punctures of $R_i$, we obtain a forgetful map $\PMod(R_i) \cap \SMod(R_i) \to \SMod(S_{g_i})$.  The image of $G_i$ under this map is a subgroup of $\SI(S_{g_i})$ \cite[Lemma 5.10]{bbm}.  By induction, we have $\cd(\SI(S_{g_i})) \leq g_i-1$.  By Proposition~\ref{cd bes}, we have $\cd(G_i) \leq g_i-1+p$.

We now address the second statement, which treats the case where $k_i=0$. The surface $\overline R_i$ is homeomorphic to a sphere with $p_i$ punctures.  Since $G < \SMod(S_g-M)$ and $G$ does not permute components of $S_g-M$, the group $G_i$ is isomorphic to a subgroup of $\PMod(\overline R_i)$.  By Fact~\ref{cd sub} then, $\cd(G_i) \leq \cd(\PMod(\overline R_i))$.  But by Proposition~\ref{cd pmod}, the latter is at most $p_i-3$.

We now have
\begin{eqnarray*}
\cd(\Stab_{\SI(S_g)}(M)) & = & \cd(G) \\
& \leq & \sum \cd(G_i) \\
& \leq  & \sum_{i=1}^P (g_i-1+p_i) + \sum_{i=P+1}^{P+Z} (p_i-3) \\ 
& = & \sum_{i=1}^{P} g_i + \sum_{i=1}^{P+Z} p_i  - P - 3Z \\
& = & (g-P+1) + 2 |\overline M |  - P -3Z \\
& = & g - 1 - Z + 2\left (|\overline M| + 1 - P - Z\right) \\
& = & g - 1 - Z.
\end{eqnarray*}
The first equality and first inequality follow from the above discussion.  The second inequality is the content of the claim.  The third equality follows from Lemma~\ref{some guy} and the fifth equality from Lemma~\ref{other guy}.  The other two equalities are just algebra.
\end{proof}

\subsection{Proof of Main Theorem~\ref{thm:cd}}

Combining Proposition~\ref{cell dim} and Lemma~\ref{lemma:big computation} we obtain the following.

\begin{prop}
\label{stab dim plus cell dim}
Assume that Main Theorem~\ref{thm:cd} is true for all genera between 1 and $g-1$ inclusive.
For any cell $\tau$ in $\SB_x(S_g)$, we have 
\[ \cd(\Stab_{\SI(S_g)}(\tau)) + \dim(\tau) \leq g-1. \]
\end{prop}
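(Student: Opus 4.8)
The plan is to treat this proposition as a bookkeeping combination of the two preceding results, since the substantive work has already been carried out. First I would recall that, by the definition of $\SB_x(S_g)$ in Section~\ref{sec:sb}, every cell $\tau$ of the complex is of the form $P_M$ for some skew-symmetric multicurve $M$ with $[M] \in \SM$. It therefore suffices to establish the stated bound for $\tau = P_M$ and then observe that, because the bound is the uniform constant $g-1$, it automatically passes to the supremum over all cells.

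Next I would invoke the identification, recorded at the opening of the subsection on cell stabilizers, that the stabilizer of the cell $P_M$ in $\SI(S_g)$ is precisely the stabilizer of the isotopy class $[M]$; that is, $\Stab_{\SI(S_g)}(\tau) = \Stab_{\SI(S_g)}(M)$. One should confirm this directly: an element of $\SI(S_g)$ preserving $P_M$ setwise must permute its vertices, which correspond to the basic skew-symmetric cycles supported on $M$, and hence must preserve the multicurve $M$ itself; conversely, any element stabilizing $[M]$ permutes these basic cycles and so carries $P_M$ to itself.

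With this identification in hand, the two inputs combine immediately. By Proposition~\ref{cell dim} we have $\dim(\tau) = \dim(P_M) = Z$, where $Z = Z(M)$ is the number of complementary components of $\overline M$ in $S_{0,2g+2}$ containing no cone points. Under the inductive hypothesis that Main Theorem~\ref{thm:cd} holds for all genera from $1$ to $g-1$, Lemma~\ref{lemma:big computation} gives $\cd(\Stab_{\SI(S_g)}(M)) \leq g-1-Z$. Adding these,
\[ \cd(\Stab_{\SI(S_g)}(\tau)) + \dim(\tau) \leq (g-1-Z) + Z = g-1, \]
which is the desired inequality.

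I do not expect a genuine obstacle here: all of the difficulty resides in Lemma~\ref{lemma:big computation}, which in turn rests on the Riemann--Hurwitz computations of Lemmas~\ref{p} and~\ref{some guy} and on the Birman--Hilden machinery feeding into Proposition~\ref{cd bes}. The only point demanding a moment's care is the identification in the second step, namely ruling out an element of $\SI(S_g)$ that fixes $P_M$ setwise while acting nontrivially on $M$ through a symmetry of the polytope; this is handled by the vertex-permutation argument above.
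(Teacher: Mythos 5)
Your proposal is correct and takes essentially the same route as the paper, whose entire proof is the one-line observation that Proposition~\ref{cell dim} and Lemma~\ref{lemma:big computation} combine to give the bound $\cd(\Stab_{\SI(S_g)}(\tau)) + \dim(\tau) \leq (g-1-Z) + Z = g-1$. Your additional verification that the stabilizer of the cell $P_M$ coincides with the stabilizer of $[M]$ is a point the paper simply asserts at the start of its subsection on cell stabilizers, so including the vertex-permutation argument is careful but not a departure.
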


We can now obtain the following lower bound for $\cd(\SI(S_g))$ by induction on $g$ and applying Propositions~\ref{prop:quillen} and~\ref{stab dim plus cell dim}

\begin{prop}
\label{prop:cd upper}
For $g \geq 1$, we have $\cd(\SI(S_g)) \leq g-1$.
\end{prop}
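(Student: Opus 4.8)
The plan is to argue by induction on $g$, feeding the contractibility of $\SB_x(S_g)$ into Quillen's bound (Proposition~\ref{prop:quillen}). The two genuine inputs---the stabilizer dimension estimate (Proposition~\ref{stab dim plus cell dim}) and the contractibility of the complex (Theorem~\ref{b contract})---are already in hand, so the task here is really to organize the induction correctly.

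For the base case $g=1$, I would observe that $\SI(S_1)$ is a subgroup of the Torelli group $\I(S_1)$, which is trivial (as noted in the introduction). Hence $\SI(S_1)$ is trivial and $\cd(\SI(S_1)) = 0 = g-1$, as required.

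For the inductive step, fix $g \geq 2$ and assume the upper bound $\cd(\SI(S_h)) \leq h-1$ for every $1 \leq h \leq g-1$. Together with the unconditional lower bound of Proposition~\ref{prop:cd lower}, this says precisely that Main Theorem~\ref{thm:cd} holds for all genera between $1$ and $g-1$ inclusive, which is exactly the hypothesis required by Proposition~\ref{stab dim plus cell dim}. Now choose any primitive $x \in H_1(S_g;\Z)$---such an element exists since $g \geq 1$---so that $\SB_x(S_g)$ is contractible by Theorem~\ref{b contract}, and consider the cellular action of $\SI(S_g)$ on it. Proposition~\ref{prop:quillen} then yields
\[ \cd(\SI(S_g)) \leq \sup_{\tau}\{\cd(\Stab_{\SI(S_g)}(\tau)) + \dim(\tau)\}, \]
where $\tau$ ranges over the cells of $\SB_x(S_g)$. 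By Proposition~\ref{stab dim plus cell dim}, every term in this supremum is at most $g-1$, and therefore $\cd(\SI(S_g)) \leq g-1$, which closes the induction.

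The one delicate point to verify is that the induction is genuinely well-founded and free of circularity. Proposition~\ref{stab dim plus cell dim}, via Lemma~\ref{lemma:big computation}, invokes the full Main Theorem in lower genera; however, inspecting the proof of that lemma shows that only the inductive upper bound $\cd(\SI(S_{g_i})) \leq g_i - 1$ is actually used there, while the matching lower bounds are supplied unconditionally by Proposition~\ref{prop:cd lower}. Thus the induction really closes on the single statement $\cd(\SI(S_h)) \leq h-1$. I do not anticipate any serious obstacle beyond this bookkeeping, since all of the geometric content has already been absorbed into the contractibility of $\SB_x(S_g)$ and the stabilizer estimate.
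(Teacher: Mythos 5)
Your proposal is correct and follows essentially the same route as the paper: induction on $g$, with the trivial base case $\SI(S_1) \leq \I(S_1) = 1$, and the inductive step obtained by applying Proposition~\ref{prop:quillen} to the action of $\SI(S_g)$ on the contractible complex $\SB_x(S_g)$ together with the stabilizer bound of Proposition~\ref{stab dim plus cell dim}. Your observation that the induction closes on the upper bound alone---because the lower bounds needed to invoke Main Theorem~\ref{thm:cd} in lower genera are supplied unconditionally by Proposition~\ref{prop:cd lower}---is exactly the bookkeeping the paper leaves implicit.
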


Propositions~\ref{prop:cd lower} and~\ref{prop:cd upper} immediately imply Main Theorem~\ref{thm:cd}.


\section{Infinite generation of top homology}
\label{sec:top}

In this section, we prove Main Theorem~\ref{thm:htop}.  The basic strategy is to employ the following fact, which is a consequence of the Cartan--Leray spectral sequence \cite[Fact 8.2]{bbm}.

\begin{prop}
\label{ss}
Suppose a group $G$ acts without rotations on a contractible cell complex X.  Suppose that for each cell $\tau$ of $X$ we have
\[ \cd(\Stab_G(\tau)) + \dim(\tau) \leq D. \]
Then for any vertex $v$ of $X$, the group $H_D(\Stab_G(v);\Z)$ injects into $H_D(G;\Z)$.
\end{prop}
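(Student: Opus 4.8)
The plan is to run the equivariant homology spectral sequence for the action of $G$ on $X$, which, because $X$ is contractible, is the Cartan--Leray spectral sequence computing $H_*(G;\Z)$. Choosing for each $p$ a set $\Sigma_p$ of representatives for the $G$-orbits of $p$-cells, this is a homological spectral sequence with
\[ E^1_{p,q} = \bigoplus_{\tau \in \Sigma_p} H_q(\Stab_G(\tau);\Z) \Longrightarrow H_{p+q}(G;\Z). \]
The hypothesis that $G$ acts without rotations is exactly what guarantees that each cell stabilizer acts trivially on the orientation module of its cell, so that the coefficients appearing in the $E^1$-page are the untwisted constants $\Z$ rather than a twisted version; this is the one place where that hypothesis is genuinely used.

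The next step is to convert the dimension hypothesis into a vanishing statement on the $E^1$-page. Since homological dimension is bounded above by cohomological dimension, the inequality $\cd(\Stab_G(\tau)) \leq D - \dim(\tau)$ yields $H_q(\Stab_G(\tau);\Z) = 0$ whenever $q > D - \dim(\tau)$. Observe also that the hypothesis forces $\dim(\tau) \leq D$ for every cell, since $\cd$ is never negative, so there are no cells contributing in columns $p > D$. Altogether this says precisely that $E^1_{p,q} = 0$ for all $p+q > D$, so the entire $E^1$-page lies on or below the antidiagonal $p+q = D$.

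Now I would isolate the corner term $E^1_{0,D} = \bigoplus_{v \in \Sigma_0} H_D(\Stab_G(v);\Z)$, one summand of which is $H_D(\Stab_G(v);\Z)$ for the given vertex $v$. Every differential leaving $E^r_{0,D}$ lands in a column with negative first index and hence vanishes, while every differential entering $E^r_{0,D}$ originates from a group of total degree $D+1$, which is zero by the vanishing just established. Thus $E^\infty_{0,D} = E^1_{0,D}$. By convergence, $E^\infty_{0,D}$ is the bottom step $F_0 H_D(G;\Z)$ of the filtration of $H_D(G;\Z)$, and in particular it is a subgroup of $H_D(G;\Z)$. Restricting the resulting injection to the summand indexed by $v$ gives the desired injection $H_D(\Stab_G(v);\Z) \hookrightarrow H_D(G;\Z)$.

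The main obstacle here is bookkeeping rather than any deep input: one must set up the spectral sequence with the correct untwisted coefficients, and one must be careful that the hypothesis is phrased in terms of \emph{cohomological} dimension, so that the elementary inequality $\mathrm{hd} \leq \cd$ has to be invoked to kill the relevant \emph{homology} groups. Once these two points are handled, the edge-of-the-spectral-sequence argument identifying the corner term as a subgroup of the abutment is routine, and indeed this is the content of the cited fact of Bestvina--Bux--Margalit.
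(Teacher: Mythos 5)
Your proof is correct and follows exactly the route the paper indicates: the paper does not prove this proposition itself but cites it as a consequence of the Cartan--Leray spectral sequence (Fact 8.2 of Bestvina--Bux--Margalit), and your argument---untwisted coefficients from the no-rotations hypothesis, vanishing of $E^1_{p,q}$ above the antidiagonal via $\mathrm{hd} \leq \cd$, and survival of the corner term $E^1_{0,D}$ as the bottom filtration step of $H_D(G;\Z)$---is precisely that spectral sequence argument.
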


We will apply Proposition~\ref{ss} to the case of the $\SI(S_g)$ action on $\SB_x(S_g)$.  By Proposition~\ref{stab dim plus cell dim}, it suffices to show that the group $H_{g-1}(\Stab_{\SI(S_g)}(v);\Z)$ is infinitely generated for some choice of vertex $v$ of $\SB_x(S_g)$.

We proceed by induction on $g$.  By Mess's theorem that $\I(S_2)$ is an infinite rank free group \cite[Proposition 4]{gm}, Main Theorem~\ref{thm:htop} holds for $g = 2$.  Now assume that $g \geq 3$.

Let $v$ be a vertex of $\SB_x(S_g)$ corresponding to a skew-symmetric nonseparating curve (or, a skew-symmetric pair where the two curves in the pair are homotopic), and let $\Stab_{\SI(S_g)}(v)$ denote the stabilizer of $v$ in $\SI(S_g)$.  There is a splitting
\[ \Stab_{\SI(S_g)}(v) \cong \SI(S_{g-1}) \ltimes K, \]
where $K$ is an infinite rank free group \cite[Theorem 4.11 plus Lemma 5.8]{sibk}.  What is more, $K$ contains a Dehn twist $T_c$, where $c$ is a symmetric separating curve in $S_g$ cutting off a handle containing $v$.  It follows from the explicit description of the splitting that $T_c$ is fixed by the action of $\SI(S_{g-1})$.

Applying the Hochschild--Serre spectral sequence, we obtain
\[ H_{g-1}(\Stab_{\SI(S_g)}(v);\Z) \cong H_{g-2}(\SI(S_{g-1}); H_1(K;\Z)). \]

Johnson defined a homomorphism that maps $\K(S_g)$ to a free abelian group and maps each Dehn twist in $\K(S_g)$ nontrivially \cite[Proposition 1.1]{morita}.  Since $K < \SI(S_g) < \K(S_g)$, it follows that $A = \langle [T_c] \rangle$ is a free submodule of $H_1(K;\Z)$.

Since $A$ is torsion free, the universal coefficients theorem gives us
\[ H_{g-2}(\SI(S_{g-1});A) \cong H_{g-2}(\SI(S_{g-1});\Z)\otimes A. \]
Because $A$ is a trivial $\SI(S_{g-1})$-module, the latter is infinitely generated by induction.  

It thus remains to show that $H_{g-2}(\SI(S_{g-1});A)$ injects into the group $H_{g-2}(\SI(S_{g-1}); H_1(K;\Z))$.  The short exact sequence
\[ 1 \to A \to H_1(K;\Z) \to H_1(K;\Z)/A \to 1 \]
induces a long exact sequence of homology groups:
\[ \cdots \to H_{g-1}(\SI(S_{g-1}); H_1(K;\Z)/A) \to H_{g-2}(\SI(S_{g-1});A) \]
\[ \to H_{g-2}(\SI(S_{g-1});H_1(K;\Z)) \to \cdots. \]
By Main Theorem~\ref{thm:cd}, the first term shown is trivial.

Thus, $H_{g-2}(\SI(S_{g-1}); H_1(K;\Z)) \cong  H_{g-1}(\Stab_{\SI(S_g)}(v);\Z)$ is infinitely generated.  By Propositions~\ref{ss} and~\ref{stab dim plus cell dim}, our Main Theorem~\ref{thm:htop} is proven.


\p{Application to the Burau representation.}
Let $P$ be a pair of points in $S_g$ that are interchanged by $s$.  Let $\SI(S_g,P)$ denote the subgroup of $\SMod(S_g,P)$ consisting of elements that act trivially on the relative homology $H_1(S_g,P;\Z)$.  We have isomorphisms
\begin{eqnarray*}
\Bur_{2g+1} &\cong& \SI(S_g) \times \Z \qquad \qquad \text{ and} \\
\Bur_{2g+2} &\cong& \Stab_{\SI(S_{g+1})}(v) \times \Z
\end{eqnarray*}
when $g \geq 2$; see \cite[Lemma 5.8]{sibk}, \cite{si}, and \cite{mp}.

The group $\Stab_{\SI(S_{g+1})}(v)$ is isomorphic to $\SI(S_{g+1}) \ltimes F_\infty$.  Thus, by Main Theorem~\ref{thm:cd} and Fact~\ref{cd ses}, we have $\cd(\Stab_{\SI(S_{g+1})}(v)) \leq g+1$.  On the other hand, we showed above that $H_{g+1}(\Stab_{\SI(S_{g+1})}(v);\Z)$ is infinitely generated, so in fact $\cd(\Stab_{\SI(S_{g+1})}(v)) = g+1$.  Theorem~\ref{thm:burau} now follows immediately from the K\"unneth formula.

\bibliographystyle{plain}
\bibliography{cdsi}

\end{document}